\def\titlerunning#1{\gdef\titrun{#1}}
\def\author#1{\gdef\autrun{\def\and{\unskip, }#1}\gdef\@author{#1}}
\def\address#1{{\def\and{\\\hspace*{18pt}}\renewcommand{\thefootnote}{}%
\footnote {#1}}%
\markboth{\autrun}{\titrun}}
\def\email#1{e-mail: #1}
\def\subjclass#1{{\renewcommand{\thefootnote}{}%
\footnote{\emph{Mathematics Subject Classification (2010):} #1}}}
\def\keywords#1{\par\medskip
\noindent\textbf{Keywords.} #1}
\newtheorem{thm}{Theorem}[section]
\newtheorem{lem}[thm]{Lemma}
\newtheorem{mainthm}[thm]{Main Theorem}
\theoremstyle{definition}
\newtheorem{defin}[thm]{Definition}
\newtheorem*{xrem}{Remark}
\numberwithin{equation}{section}
\begin{document}


\baselineskip=17pt


\titlerunning{Witten Deformation and Its Application toward Morse Inequalities}

\title{Witten Deformation and Its Application toward Morse Inequalities}

\author{Fu-Hsuan Ho}

\date{}

\maketitle

\begin{abstract}
In this undergraduate thesis, we present an analytical proof of the Morse inequalities for closed smooth $n$-manifolds following Witten's approach. Using techniques from PDE theory, the proof is reduced to study the eigenspaces and eigenvalues of harmonic oscillators on $\mathbb{R}^n$.
\end{abstract}

\section{Introduction}
Morse theory has been a branch of differential topology that provides a direct way to understand the topology of smooth manifolds by studying the smooth functions on them. In the introduction of the classical book \cite{Milnor}, Milnor discussed a toy model of this idea: assume $M=T^2$ is a torus tangent to a plane $V$ and $M^a$ be the set of points $p\in M$ so that $f(p)\leq a$. and let $f:M\rightarrow \mathbb{R}$ be the height above $V$. Milnor pointed out that the homotopy type changes exactly at the critical points of $f$. These critical points are all ``non-degenerate'' (see section 2 for definition), and near each critical point $p$ one can choose proper coordinate $(x,y)$ so that $f=f(p)\pm x^2\pm y^2$. Note that the number of minus signs in the expression for $f$ at each point is the dimension of the cell we must attach to go from $M^a$ to $M^b$, where $a < f(p) < b$. This gives us a motivation to study critical points of smooth functions on manifolds.

Another formula also shows the deep connection of the topology and critical points of smooth functions on a manifold, that is, the morse inequalities.  Let $M$ be a $n$-dimensional closed and smooth manifold. Given $f\in C^{\infty}(M)$, we say $f$ is a Morse function if its critical points are non-degenerate (these terms will be defined in section 2). Morse inequalities give a series of upper bound related to the Betti number of $M$. The full statement is given in the following:

\begin{mainthm}[Morse Inequalities]
Let $M$ be a closed smooth $n$-dimensional manifold. Suppose $f\in C^{\infty}(M)$ is a Morse function. Then we have
\begin{itemize}
    \item \textbf{Weak Morse Inequalities:} for each $0\leq q\leq n$, 
    \begin{align}
      b_q \leq m_q
    \end{align}
    
    \item \textbf{Strong Morse Inequalities:} for each $0\leq q\leq n$
       \begin{align}
         \sum^q_{j = 0} (- 1)^{q - j} b_j \leq \sum_{j = 0}^q (- 1)^{q - j} m_j,
       \end{align}
    and the equality holds when $q = n$, i.e.,
       \begin{align}\label{E:Main}
         \sum^n_{j = 0} (- 1)^{n - j} b_j = \sum_{j = 0}^n (- 1)^{n - j} m_j,
       \end{align}
\end{itemize}
where we denote $m_j$ as the cardinality of the set of critical points of order $j$ and $b_j$ be the $j$-th Betti number of $M$ (see (\ref{E:Betti})), for any $j\in\{0,\ldots,q\}$.
\end{mainthm}

Note that (\ref{E:Main}) gives a way to derive $\chi(M)$ by calculating the alternating sum of numbers of critical points up to index $n$, since the definition of Euler characteristic number $\chi(M)$ of $M$ is exactly the alternating sum of Betti numbers.

A topological proof may be founded in \cite{Milnor}. However in 1982, Witten \cite{Witten} discovered the first analytic approach to the Morse inequalities through considering the de Rham complex the deformed exterior derivative $d_t=e^{-tf}d e^{tf}$, i.e., the so-called Witten deformation. Inspired by physics, he adopted the semi-classical analysis of the eigenvalues of some properly chosen Schr\"{o}dinger operators on $M$ to attack this problem. His idea was immediately absorbed by many mathematicians and was turned into mathematically rigor papers, for example, \cite{Helffer}. Another reference is \cite{Cycon}. 

In this paper, we try to provide a rather direct and detailed proof that can be accessed by undergraduate students who have the basic knowledge of geometry. In section 2, we recall the basic notations and preliminaries that are needed in this thesis. Then in section 3, we introduce to the Witten Laplacian and reduce the proof of Morse inequality to studying the dimension of the eigenspace of Witten Laplacian, and we do this work in section 4. The strategy is constructing enough linear independent $q$-forms on $M$ (see subsection 4.2).  Section 5 is devoted to calculating the eigenspace of Witten Laplacian. While the calculation is essential to our proof, the method we adopt to find eigenvalues is rather standard and somehow tedious, and thus we put it in the last section.

The last thing to be notice is that although we only consider the case of closed manifolds, this method can be extended to more general cases, for example, manifolds with boundary. Also, the idea of constructing linearly independent elements of operator's eigen-space can be extended to understand other operators we are interested in.
\section{Notations and Preliminaries}

From now on, we fix $M$ to be a smooth and closed n-dimensional manifold. Let $C^{\infty}(M)$ be the set of real-valued smooth functions of $M$. A \textbf{critical point} of a given smooth function $f$ is a point $p$ so that for any local coordinate chart $(U,\varphi,x)$ near $p$,
\begin{align}
  \frac{\partial f}{\partial x_1}(p)=\cdots=\frac{\partial f}{\partial x_n}(p)=0.
\end{align}
This definition is independent of our choice of coordinate chart near $p$. For any other local coordinate chart $(V,\phi,y)$ near $p$, using the change of coordinate formula we get,
\begin{align}
  \frac{\partial f}{\partial y_i}(p)=\frac{\partial f}{\partial x_j}(p)\frac{\partial (\phi\circ\varphi^{-1})}{\partial y_i}(p)=0,\quad\forall i=1,\ldots,n.
\end{align}
A critical point $p\in M$ of $f$ is called \textbf{non-degenerate} if and only if the matrix 
\begin{align}\label{E:non-degenerate}
  \left( \frac{\partial^2 f}{\partial x_i \partial x_j}(p)\right)
\end{align}
is invertible. We quickly check that this definition is independent of the choice of the coordinate system: let $(U,\varphi,x)$ be a local chart satisfying (\ref{E:non-degenerate}). Assume $(V,\phi,y)$ is any other local chart with non-empty intersection with $U$. Then
\begin{align}\label{E:non-degenerate-CoC}
  \frac{\partial f}{\partial x_i\partial x_j}(p) &= \sum_{k,l}\frac{\partial (\phi\circ\varphi)^{-1}_k}{\partial x_i}(p)\frac{\partial f}{\partial y_k\partial y_l}(p)\frac{\partial (\phi\circ\varphi)^{-1}_l}{\partial x_j}(p).
\end{align}
Because $\phi\circ\varphi^{-1}$ is diffeomorphism, the matrix
\[
  \left(\frac{\partial (\phi\circ\varphi)^{-1}_k}{\partial x_i}(p)\right)
\]
is invertible. Therefore, from \ref{E:non-degenerate-CoC} we must derive that 
\[
  \left(\frac{\partial^2 f}{\partial y_k\partial y_l}(p)\right)
\]
is also invertible. With the terminologies prepared above, we now give the definition of Morse function:
\begin{defin}
  We say smooth function $f\in C^\infty (M)$ is a Morse function if its critical points are all non-degenerate.
\end{defin}
If $p\in M$ is a critical point, we define a symmetric bilinear form 
\[d^2 f:T_pM\times T_pM\rightarrow\mathbb{R}\]
called the \textbf{Hessian of $f$ at $p$} on $T_pM$, the tangent space of $M$ at $p$. If $X,Y\in T_p(M)$, we may extend them to smooth vector fields $\tilde{X},\tilde{Y}$ with $\tilde{X}(p)=X$ and $\tilde{Y}(p)=Y$. Define
\[
  d^2 f(X,Y)=\tilde{X}(\tilde{Y}(f))(p).
\]
In the following we will show that $d^2f$ is a well-defined symmetric bilinear functional: note $\tilde{X}(p)$ equals to $X$. Indeeds, by the formula
\[
  \tilde{X}(p)(\tilde{Y}(f))-\tilde{Y}(p)(\tilde{X}(f))=df[\tilde{X},\tilde{Y}](p)=0,
\]
$d^2 f$ is symmetric. Also by the formula above, we have 
\[
  X(\tilde{Y}(f))=\tilde{X}(p)(\tilde{Y}(f))=\tilde{Y}(p)(\tilde{X}(f))=Y(\tilde{X}(f))
\]
and thus $d^2 f$ is independent of how we extend $X$ and $Y$, i.e., it is well-defined.

Locally on a chart $(U,\varphi,x)$ near $p$, we may write $X=\sum_{i=1}^na_i \frac{\partial}{\partial x_i}|_p$ and $Y= \sum_{j=1}^nb_j \frac{\partial}{\partial x_j}|_p$. Locally, we may extend $Y$ by $\tilde{Y}=\sum_{j=1}^nb_j \frac{\partial}{\partial x_j}$. Then
\begin{align}\label{E:d^2f}
  d^2 f(X,Y)=X(\tilde{Y} f)(p)=X\left(\sum_{j=1}^nb_j \frac{\partial}{\partial x_j}\right)=\sum_{i=1,j=1}^n a_ib_j\frac{\partial^2 f}{\partial x_i\partial x_j}(p);
\end{align}
so the matrix
\begin{align}\label{d^2f-matrix-representation}
  \left(\frac{\partial^2 f}{\partial x_k\partial x_l}(p)\right)
\end{align}
represents the bilinear form $d^2 f$ under the basis $\{\frac{\partial}{\partial x_i}|_p\}_{i=1}^n$.

  Now, we mention the index of $d^2 f$. For any bilinear form $B$ defined on a vector space $\mathbf{V}$, the \textbf{index} of $B$ is defined to be the maximal dimension of any subspace $\mathbf{W}$ on which $B$ is negative definite. When we say the phrase ``\textbf{the index of $f$ at $p$}'', it always refers to the index of $d^2 f$ on $T_pM$. We can check that the index of $f$ at $p$ is the number of non-negative eigenvalues of the matrix (\ref{d^2f-matrix-representation}). The lemma of Morse, which will be stated in the following (for the proof, see \cite[Lemma 2.2]{Milnor}), shows that the index of $f$ at $p$ characterizes the local behavior of $f$ near $p$:
  
\begin{lem}[Morse]\label{lem:Morse}
  Let $f$ be a smooth function. Assume $p\in M$ is a critical point of $f$ on $M$. Then we can find a local chart $(U,\varphi,x)$ near $p$ with $\varphi(p)=0$ so that 
  \[ f\circ\varphi^{-1} (x) = - \frac{1}{2} x^2_1 - \cdots - \frac{1}{2} x_l^2 + \frac{1}{2}
     x_{l + 1}^2 + \cdots + \frac{1}{2} x_n^2, \]
where $l$ is the index of $f$ at $p$.
\end{lem}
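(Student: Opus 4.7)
The plan is to reduce the statement to a purely local problem on $\mathbb{R}^n$ and then diagonalize the second-order part of $f$ by a finite sequence of coordinate changes modeled on completing the square. Pick any initial chart $(U_0,\psi,u)$ with $\psi(p)=0$ and replace $f$ by $(f-f(p))\circ\psi^{-1}$; it is then enough to prove that if $f$ is smooth near $0\in\mathbb{R}^n$ with $f(0)=0$ and $df(0)=0$, and the Hessian $\bigl(\partial_i\partial_j f(0)\bigr)$ is non-singular, then a further local diffeomorphism fixing $0$ puts $f$ into the stated normal form.

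The first key step is a two-fold Taylor expansion with integral remainder: because $f(0)=0$ and $\partial_i f(0)=0$, on a star-shaped neighborhood of the origin one can write
\[
f(x)=\sum_{i,j}x_ix_j\,h_{ij}(x)
\]
with smooth $h_{ij}$. After symmetrizing I may assume $h_{ij}=h_{ji}$, and $h_{ij}(0)=\tfrac12\partial_i\partial_j f(0)$, so that $H(0):=(h_{ij}(0))$ is non-singular thanks to the non-degeneracy of $p$.

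Next I diagonalize inductively. Suppose that after $r-1$ steps there are coordinates $y$ in which
\[
f=\varepsilon_1 y_1^2+\cdots+\varepsilon_{r-1}y_{r-1}^2+\sum_{i,j\geq r} y_i y_j\,H_{ij}(y),
\]
with $\varepsilon_k\in\{\pm1\}$ and the residual block $\bigl(H_{ij}(0)\bigr)_{i,j\geq r}$ still non-singular. A linear change in $y_r,\dots,y_n$ arranges $H_{rr}(0)\neq 0$, and then the substitution
\[
z_r=\sqrt{|H_{rr}(y)|}\Bigl(y_r+\sum_{j>r}\frac{H_{rj}(y)}{H_{rr}(y)}\,y_j\Bigr),\qquad z_i=y_i\ (i\neq r)
\]
is smooth on a small enough neighborhood and has Jacobian at $0$ with determinant $\sqrt{|H_{rr}(0)|}\neq 0$, hence defines a local diffeomorphism by the inverse function theorem. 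A direct expansion shows that $\mathrm{sgn}(H_{rr}(0))\,z_r^2$ absorbs every term of the residual sum involving the index $r$; the new residual block is the Schur complement of $H_{rr}(0)$ in $\bigl(H_{ij}(0)\bigr)_{i,j\geq r}$, which is again non-singular. Iterating $n$ times yields coordinates $z$ in which $f=\sum_k \varepsilon_k z_k^2$, and a final rescaling $x_k=\sqrt{2}\,z_{\sigma(k)}$, with $\sigma$ chosen to group the negative signs first, produces the stated normal form.

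I expect the main obstacle to be the bookkeeping in the inductive step: one must verify simultaneously that the new coefficient functions remain smooth (forcing the working neighborhood to shrink so that $H_{rr}$ never vanishes), that the coordinate change is truly a diffeomorphism rather than a formal substitution, and that the residual block at each stage stays non-singular --- the algebraic content of the Schur complement identity. Once the normal form is obtained, the identification of the number of negative signs with the index $l$ of $d^2 f$ at $p$ follows from Sylvester's law of inertia, since in the final coordinates the Hessian matrix is diagonal with entries $\pm 2$.
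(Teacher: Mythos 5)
Your proposal is correct and is essentially the standard argument of Milnor (Taylor expansion with integral remainder, inductive completion of the square, Sylvester's law of inertia to identify $l$ with the index), which is precisely the proof the paper defers to by citing \cite[Lemma 2.2]{Milnor} rather than reproducing it. The details you flag as the main obstacles --- smoothness of the new coefficients after shrinking the neighborhood, invertibility of each substitution via the inverse function theorem, and non-singularity of the residual (Schur-complement) block --- are exactly the points that need checking, and your treatment of them is sound.
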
  

From now on, we always assume $f$ to be a fixed Morse function on $M$. We introduce the notation
\[\text{Crit}(f)\]
as the set of critical points of $f$. By Morse lemma, the set $\text{Crit}(f)$ must be discrete, and it turns out to be finite by compactness of $M$. We also introduce the notation
\[\text{Crit}(f;j)\]
to represent the set of critical points on which the index of $f$ equals to $j$, and we define
\[m_j=\#\text{Crit}(f;j)\]
to be the cardinality of $\text{Crit}(f;j)$.
\subsection{de Rham Cohomology}
In this subsection, we discuss the basic properties of de Rham cohomology. The readers may consult Bott and Tu's book \cite{BottTu} for more information. 

Denote $\Omega^0(M)$ be the space of smooth functions. For any $k=1,\ldots,n$, let $\Lambda^k T^*M$ be the bundle of $k$ forms:
\[
  \Lambda^k T^* M=\coprod_{p\in M}\Lambda^kT^*_pM.
\]
Fix $p\in M$ and let $(U,\varphi,x)$ be any local chart defined near $p$. Assume $\{dx_1,\ldots,dx_n\}$ is the basis for $T^*_pM$. Then the fibre of $\Lambda^k T^*M$ at $p$ is the vector space $\Lambda^kT^*_pM$ spanned by
\[\{dx_{i_1}\wedge\cdots\wedge d_{i_k}:1\leq i_1<\cdots<i_k\leq n\}.\]

A smooth section of $\Lambda^k T^* M$ is called a \textbf{smooth $k$-forms}. We denote 
\[\Omega^k(M)=\Gamma\left(\Lambda^k T^* M\right)\]
as the vector space of smooth $k$-forms. On any local chart $(U,\varphi,x)$, a smooth $k$-form $\omega$ can be written as
\begin{align}\label{E:Dform-local-form}
  \omega=\sum_{i_1<\cdots<i_k} \omega_{i_1\cdots i_k} dx_{i_1}\wedge\cdots\wedge dx_{i_k},
\end{align}
where $\omega_{i_1\cdots i_k}$ is a smooth function defined on $U$. Once again, the local representation (\ref{E:Dform-local-form}) is independent of choice of local chart. Suppose on $(\tilde{U},\tilde{\varphi},\tilde{x})$ with $\tilde{U}\cap U\neq \emptyset$. Then there exists

The exterior derivative $d$ is defined to be the unique real linear mapping
\[d:\Omega^q(M)\rightarrow\Omega^{q+1}(M)\]
sending smooth $q$-forms to $q+1$-forms, for all $q=0,\ldots,n$, satisfying the following properties
\begin{enumerate}[(i)]
\item $df$ is the differential of smooth function for any $f\in \Omega^0(M)$.
\item $d^2 f=0$ for any $f\in \Omega^0(M)$.
\item $d(\alpha\wedge\beta)=d\alpha\wedge\beta+(-1)^p\alpha\wedge d\beta$ where $\alpha$ is any $p$-form and $\beta$ is any $r$-form.
\end{enumerate}
If there is a need to distinguish those $d$s acting on different form, we will add an upper index $(q)$ and write $d^{(q)}$. 

The complex
\[ 0 \rightarrow \Omega^0 (M) \xrightarrow{d} \Omega^1 (M) \xrightarrow{d}
   \cdots \xrightarrow{d} \Omega^n (M) \rightarrow 0. \]
together with the exterior derivative $d$ is called the \textbf{de Rham complex on $M$}. 

Note $d^2$ always vanish. We verify this in the following: let $\omega\in\Omega^{q}$. Then on any local chart $(U,\varphi,x)$, we can write 
\[\omega=\sum_{i_1<\cdots<i_q}\omega_{i_1\cdots i_q} dx^{i_1}\wedge\cdots\wedge dx^{i_q}.\]
Applying $d$ twice on $\omega$, we get
\begin{align*}
d^2\omega=\sum_{i,j=1}^n\sum_{i_1<\cdots<i_q}\frac{\partial^2\omega_{i_1\cdots i_q}}{\partial x_i\partial x_j} dx^i\wedge dx^j\wedge dx^{i_1}\wedge\cdots\wedge dx^{i_q} \\
=\sum_{i<j}\sum_{i_1<\cdots<i_q}\left(\frac{\partial^2\omega_{i_1\cdots i_q}}{\partial x_i\partial x_j}-\frac{\partial^2\omega_{i_1\cdots i_q}}{\partial x_j\partial x_i}\right) dx^i\wedge dx^j\wedge dx^{i_1}\wedge\cdots\wedge dx^{i_q}=0
\end{align*}
since $\omega_{i_1\cdots i_q}$ is a smooth function and hence we can change the order of derivations.

Thus, it is able to define the $j$-th de Rham cohomology
\[ H^j (M) = \frac{Ker (d : \Omega^j (M) \rightarrow \Omega^{j + 1}
   (M))}{Im (d : \Omega^{j - 1} (M) \rightarrow \Omega^j (M))} \]
for all $j = 0, \cdots, n$. Since our manifold $M$ is has finite ``good cover'' (in the sense of Bott and Tu), $H^j (M)$ is a finite
dimensional (real) vector space (the readers may consult their book \cite[Proposition 5.3.1]{BottTu}). As a result, it is able to define
\begin{align}\label{E:Betti}
  b_j = \dim H^j (M)
\end{align}
called the \textbf{$j$-th Betti number} for all $j = 0, \cdots, n$.

Fix $g$ to be a Riemannian metric on the tangent bundle $T M$. It also induces a bundle metric pointwisely on $\alpha, \beta \in\Lambda^r T_p^* M$ (independent of basis we chosen):
\[
  g(\alpha,\beta)=g^{i_1j_1}\cdots g^{i_kj_k}\alpha_{i_1\ldots i_k}\beta_{j_1\ldots j_k}.
\]
which we also denoted by $g$.
Let $dv_g$ be the density on $M$ induced by $g$ so that for any chart $(U,\phi,x)$, $w \in C_0^{\infty} (U)$,
\[ \int_M w\, d v_g = \int_{\phi(U)} w\circ\phi^{-1} (x) \sqrt{|\det (\phi_{*}g)|} d
   x_1\wedge\cdots\wedge dx_n. \]
We remark that if $M$ is orientable, then it is no need to take absolute value above.
Through the density $dv_g$, we are then able to defined a global $L^2$ inner product by
\[ (\alpha, \beta) := \int_M g(\alpha, \beta)\,d v_g. \]
Under this inner product $(\cdot,\cdot)$, we denote $d^{*(q)}$ to be the Hilbert space adjoint of $d^{(q)}$ for all $q=0,\ldots,n$. That is,
\[
  (d u,v)=(u, d^{*} v)
\]
for all $u\in \Omega^q(M)$ and $v\in \Omega^{q+1}(M)$ (once more, we will drop the upper index $(q)$ and write $d^*$ if there is no need to clarify which space $d^{*(q)}$ is acting on). We call the operator
\[
  \triangle^{(q)}:=dd^*+d^*d:\Omega^{q}(M)\rightarrow\Omega^{q}(M)
\]
the Hodge Laplacian for all $q=0,\ldots,n$. Note that this operator depends on the Riemannian metric $g$ since $d^*$ depends on the inner product $(\cdot,\cdot)$ and this inner product is determined by $g$.

We end this section with two lemmas about the local form of $d$ and $d^*$:

\begin{lem}\label{lem:d-local}
Fix $q\in\{0,\ldots,n-1\}$. For any $\omega\in\Omega^q(M)$, we have
\begin{align}\label{E:d-local}
d\omega=\sum_{i=1}^ndx_i\wedge\nabla_{\frac{\partial}{\partial x_i}} \omega.
\end{align}
\end{lem}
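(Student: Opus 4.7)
The plan is to verify the identity pointwise in an arbitrary coordinate chart $(U,\varphi,x)$, using the torsion-freeness of the Levi-Civita connection. Both $d$ and the covariant derivative $\nabla$ (extended to $\Omega^q(M)$ by the Leibniz rule) are $\mathbb{R}$-linear differential operators intrinsically defined on forms, so it suffices to check the formula on a general element $\omega \in \Omega^q(U)$, written locally as $\omega = \sum_{i_1<\cdots<i_q} \omega_{i_1\cdots i_q} \, dx_{i_1}\wedge\cdots\wedge dx_{i_q}$.

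First I would compute $d\omega$ directly from the axiomatic properties (i)--(iii) of $d$; since $d(dx_{i_k}) = 0$ and $d$ is a graded derivation, one obtains
\[
d\omega = \sum_{j=1}^n \sum_{i_1<\cdots<i_q} \frac{\partial \omega_{i_1\cdots i_q}}{\partial x_j}\, dx_j \wedge dx_{i_1}\wedge\cdots\wedge dx_{i_q}.
\]
Then I would expand $\nabla_{\partial/\partial x_j}\omega$ using the Leibniz rule. This produces two kinds of terms: a ``coefficient-derivative'' piece $\sum_I (\partial \omega_I / \partial x_j)\, dx_{i_1}\wedge\cdots\wedge dx_{i_q}$, which upon wedging with $dx_j$ and summing over $j$ reproduces exactly the expression for $d\omega$ displayed above, and a ``Christoffel correction'' of the form $-\sum_I \omega_I \sum_{k,\ell} \Gamma^{i_k}_{j\ell}\, dx_{i_1}\wedge\cdots\wedge dx_\ell\wedge\cdots\wedge dx_{i_q}$ (with $dx_\ell$ in the $k$-th slot), coming from $\nabla_{\partial/\partial x_j} dx_{i_k} = -\Gamma^{i_k}_{j\ell}\, dx_\ell$.

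The key step is then to show that, after wedging on the left by $dx_j$ and summing over $j$, the Christoffel correction vanishes. For fixed multi-index $I$ and fixed slot $k$, one moves the leading $dx_j$ past the $k-1$ factors $dx_{i_1},\ldots,dx_{i_{k-1}}$, picking up a sign $(-1)^{k-1}$ and leaving a factor $\sum_{j,\ell} \Gamma^{i_k}_{j\ell}\, dx_j \wedge dx_\ell$ at the $(k,k{+}1)$-position. Since the Levi-Civita connection is torsion-free, $\Gamma^{i_k}_{j\ell} = \Gamma^{i_k}_{\ell j}$, while $dx_j \wedge dx_\ell$ is antisymmetric in $j,\ell$, so this contraction is zero. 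Summing over $k$ and $I$ shows that the Christoffel terms cancel identically, which is exactly what we want.

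The main obstacle is not conceptual but bookkeeping: one has to keep track of the sign incurred when the leading $dx_j$ is permuted through $dx_{i_1}\wedge\cdots\wedge dx_{i_{k-1}}$, and to be careful that the ``symmetric-times-antisymmetric'' cancellation is applied slot by slot rather than globally. Beyond that, the lemma is essentially the standard fact that $d$ coincides with the antisymmetrization of any torsion-free covariant derivative on forms; once the signs are sorted out, no further analytic input is required.
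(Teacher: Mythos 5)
Your argument is correct, and it takes a genuinely different (and in some ways more robust) route than the paper. The paper verifies the identity pointwise: it fixes $p$, passes to normal geodesic coordinates centred at $p$, and uses the fact that the Christoffel symbols vanish at $p$, so that $\nabla_{\partial/\partial x_i}$ reduces to the partial derivative on coefficients and annihilates the $dx_{i_j}$ there; since both sides are globally defined sections of $\Lambda^{q+1}T^*M$, agreement at every $p$ gives the lemma. You instead work in an arbitrary chart and show directly that the Christoffel correction terms, namely
\begin{equation*}
-\sum_{j}\,dx_j\wedge\Bigl(\sum_{I}\omega_I\sum_{k,\ell}\Gamma^{i_k}_{j\ell}\,dx_{i_1}\wedge\cdots\wedge dx_\ell\wedge\cdots\wedge dx_{i_q}\Bigr),
\end{equation*}
vanish because the symmetric factor $\Gamma^{i_k}_{j\ell}=\Gamma^{i_k}_{\ell j}$ is contracted against the antisymmetric $dx_j\wedge dx_\ell$. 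Your cancellation argument only uses torsion-freeness, so it proves the stronger statement that $d$ is the antisymmetrization of \emph{any} torsion-free connection, and it avoids invoking the existence of normal coordinates (which itself rests on metric compatibility as well). The price is the sign bookkeeping you identify; the paper's normal-coordinate trick buys a shorter computation at the cost of being tied to the Levi--Civita connection and to a pointwise rather than chart-wide verification. Both are complete proofs.
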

\begin{xrem} $ $
  \begin{enumerate}[(i)]
    \item The operator $\nabla:\Gamma(\Lambda^qT^*M)\rightarrow \Gamma(\Lambda^qT^*M\otimes T^*M)$ is natural extension of the Levi-civita connection $\nabla^{TM}:\Gamma(TM)\rightarrow\Gamma(TM\otimes TM)$.
    \item The notation $dx_k\wedge$ represents the exterior multiplications by $dx_k$, respectively. On any local chart, $dx_k\wedge$ sends an given $r$-form $\alpha$, $r\in\{0,\ldots,n\}$ to $dx_k\wedge\alpha$. The interior multiplication acts on differential forms following the rules:
  \begin{enumerate}
    \item if $\alpha$ is a $1$-form, then 
    \[dx_k\lrcorner\alpha=\left\langle\alpha,\frac{\partial}{\partial x_k}\right\rangle\]
    where $\langle\cdot,\cdot\rangle$ is the dual pairing;
    \item if $\beta$ is a smooth $p$-form and $\gamma$ is a smooth $q$-form, then
    \[dx_k\lrcorner(\beta\wedge\gamma)=(dx_k\lrcorner\beta)\wedge\gamma+(-1)^p\beta\wedge (dx_k\lrcorner\gamma).\]
  \end{enumerate}
  \end{enumerate}
\end{xrem}
\begin{proof}
  Note that both sides of (\ref{E:d-local}) are section of $\Lambda^{q+1}T^*M$. So we can check that they agree with each other on each $p\in M$. Fix $p\in M$ and we adopt the normal geodesic coordinate around $p$. Say near $p$, we have
  \[
    \omega=\sum_{i_1<\cdots <i_q}\omega_{i_1\cdots i_q} dx_{i_1}\wedge\cdots\wedge dx_{i_q}.
  \]
Then 
\begin{align}
\begin{split}
d\omega=\sum_{i_1<\cdots< i_q}\sum_{i=1}^n\frac{\partial\omega_{i_1\cdots i_q}}{\partial x_i} dx_i\wedge dx_{i_1}\wedge\cdots\wedge dx_{i_q} \\
=\sum_{i_1<\cdots< i_q}\sum_{i=1}^n \nabla_{\frac{\partial}{\partial x_i}}\omega_{i_1\cdots i_q} dx_i\wedge dx_{i_1}\wedge\cdots\wedge dx_{i_q} \\
=\sum_{i_1<\cdots< i_q}\sum_{i=1}^n dx_i\wedge\nabla_{\frac{\partial}{\partial x_i}}(\omega_{i_1\cdots i_q} dx_{i_1}\wedge\cdots\wedge dx_{i_q}) \\
=\sum_{i=1}^n dx_i\wedge\nabla_{\frac{\partial}{\partial x_i}}\omega.
\end{split}
\end{align}
\end{proof}

\begin{lem}\label{lem:d^*-local}
Fix $q\in\{1,\ldots,n\}$. For any $\omega\in\Omega^q(M)$, we have
\begin{align}\label{E:d^*-local}
d^*\omega=-\sum_{i=1}^ndx_i\lrcorner\nabla_{\frac{\partial}{\partial x_i}} \omega.
\end{align}
\end{lem}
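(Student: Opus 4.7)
The plan is to mirror the strategy used for Lemma \ref{lem:d-local}: fix an arbitrary $p\in M$, work in normal geodesic coordinates $(U,\varphi,x)$ centred at $p$, and verify the stated identity pointwise at $p$. Since $p$ is arbitrary, this will establish the formula throughout $M$ (read at each point in its own normal chart, or equivalently in any local orthonormal frame).

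Two ingredients will be essential. First, the usual normalisations in normal coordinates at $p$: one has $g_{ij}(p)=\delta_{ij}$, $\partial_k g_{ij}(p)=0$, the Christoffel symbols $\Gamma^k_{ij}(p)$ all vanish, and consequently $\sqrt{|\det g|}(p)=1$ together with $\partial_k\sqrt{|\det g|}(p)=0$. Hence $\nabla_{\partial/\partial x_i}\omega$ at $p$ reduces to the componentwise partial derivative of $\omega$. Second, the \emph{pointwise adjunction} between exterior and interior multiplication: at any point where $\{dx_i\}$ is orthonormal, a direct combinatorial check in the basis $\{dx^I\}$ yields $g(dx_i\wedge\alpha,\beta)=g(\alpha,dx_i\lrcorner\beta)$ for any forms $\alpha,\beta$. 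This is the pointwise version of the statement that $dx_i\wedge$ and $dx_i\lrcorner$ are adjoint operators with respect to the induced metric on forms.

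To extract the formula, I would take a test form $\eta\in\Omega^{q-1}(M)$ with compact support in $U$ and apply Lemma \ref{lem:d-local} together with the definition of $d^*$:
\begin{equation*}
  (\eta,d^*\omega)=(d\eta,\omega)=\int_U\sum_{i=1}^n g\bigl(dx_i\wedge\nabla_{\partial/\partial x_i}\eta,\,\omega\bigr)\,\sqrt{|\det g|}\,dx.
\end{equation*}
Swapping $dx_i\wedge$ across the pointwise inner product (using Step~2, up to correction terms of order $g_{ij}-\delta_{ij}$) and then integrating by parts in the variable $x_i$ converts the right-hand side into
\begin{equation*}
  -\int_U\sum_{i=1}^n g\bigl(\eta,\,dx_i\lrcorner\nabla_{\partial/\partial x_i}\omega\bigr)\,\sqrt{|\det g|}\,dx + R,
\end{equation*}
where $R$ is a remainder built out of $\partial_k g_{ij}$, $\Gamma^k_{ij}$ and $\partial_k\sqrt{|\det g|}$. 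Localising $\eta$ at $p$ (for instance, as a shrinking bump multiplied by a fixed $(q-1)$-form), the remainder $R$ vanishes to leading order at $p$ thanks to the normal-coordinate identities in Step~1, while the leading integrals reduce to pointwise inner products at $p$; matching them gives $d^*\omega(p)=-\sum_i(dx_i\lrcorner\nabla_{\partial/\partial x_i}\omega)(p)$.

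The main obstacle is precisely this bookkeeping of error terms during the integration by parts: the identity is coordinate-dependent and a priori valid only at the centre of the normal chart, so one must verify that every deviation of the coordinate coframe from being orthonormal in a neighbourhood of $p$ — the $g_{ij}-\delta_{ij}$ correction in the adjunction step, the $\Gamma^k_{ij}$ correction inside $\nabla$ on the right-hand side, and the $\partial_k\sqrt{|\det g|}$ coming from the volume form — vanishes at $p$. Normal geodesic coordinates are designed to make all of these vanish simultaneously at $p$, which is exactly why this otherwise coordinate-dependent formula holds pointwise there; the discipline of tracking and discarding each such term is where the care of the proof lies.
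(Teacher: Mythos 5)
Your proposal follows essentially the same route as the paper's own proof: test $d^*\omega$ against a $(q-1)$-form, apply Lemma \ref{lem:d-local}, use the pointwise adjointness of $dx_i\wedge$ and $dx_i\lrcorner$ in normal geodesic coordinates, and integrate by parts so that the divergence term drops out. If anything, you are more explicit than the paper about why the Christoffel, metric-deviation, and volume-density corrections may be discarded at the centre of the normal chart (the paper simply ``integrates both sides'' of a pointwise identity); your localisation of the test form is a legitimate way to close that gap, and the argument is correct.
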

\begin{xrem}
The notation $dx_k\lrcorner$ represents the interior multiplications by $dx_k$. On any local chart, the interior multiplication acts on differential forms following the rules:
  \begin{enumerate}
    \item if $\alpha$ is a $1$-form, then 
    \[dx_k\lrcorner\alpha=\left\langle\alpha,\frac{\partial}{\partial x_k}\right\rangle\]
    where $\langle\cdot,\cdot\rangle$ is the dual pairing;
    \item if $\beta$ is a smooth $p$-form and $\gamma$ is a smooth $q$-form, then
    \[dx_k\lrcorner(\beta\wedge\gamma)=(dx_k\lrcorner\beta)\wedge\gamma+(-1)^p\beta\wedge (dx_k\lrcorner\gamma).\]
  \end{enumerate}
\end{xrem}
\begin{proof}
With the same reason of the proof lemma \ref{lem:d-local}, we can restrict our calculation on a fixed point $p\in M$. Again, use the normal geodesic coordinate around $p$. Also, write
\[
\omega=\sum_{i_1<\cdots <i_q}\omega_{i_1\cdots i_q} dx_{i_1}\wedge\cdots\wedge dx_{i_q}.
\]
Applying lemma \ref{lem:d-local}, we have for any $u\in \Omega^{q-1}(M)$,
\begin{align}\label{E:2.10}
\begin{split}
g(\omega,du)=g(\omega,\sum_{i=1}^n dx_i\wedge \nabla_{\frac{\partial}{\partial x_i}} u)=\sum_{i=1}^n g(dx_i\lrcorner\omega, \nabla_{\frac{\partial}{\partial x_i}} u) \\
=\sum_{i=1}^n\frac{\partial}{\partial x_i}g(dx_i\lrcorner \omega,u)-\sum_{i=1}^ng(\nabla_\frac{\partial}{\partial x_i}dx_i\lrcorner\omega,u) \\
=\sum_{i=1}^n\nabla_{\frac{\partial}{\partial x_i}}g(dx_i\lrcorner \omega,u)-\sum_{i=1}^ng(\nabla_\frac{\partial}{\partial x_i}dx_i\lrcorner\omega,u)
\end{split}
\end{align}
Integrating the both sides of formula (\ref{E:2.10}) and noticing that $(d^*\omega,u)=(\omega,du)$, we then derive that 
\begin{align}
(d^*\omega,u)=(\sum_{i=1}^n-\nabla_{\frac{\partial}{\partial x_i}}dx_i\lrcorner\omega,u)
\end{align}
for all $u\in \Omega^{q-1}(M)$, i.e., $d^*\omega=\sum_{i=1}^n-\nabla_{\frac{\partial}{\partial x_i}}dx_i\lrcorner\omega$.
\end{proof}

\section{Witten Deformation of Laplacian}

This section is divided into the following parts: first, we will introduce the deformed de Rham cohomologies. We will show that they are all isomorphic to the usual de Rham cohomologies. Next, we will define the Witten Laplacian. We will prove a similar inequalities like the one of Morse for the deformed de Rham cohomologies. The only difference is the upper bound of cohomology dimension are replaced by things related to the dimension of eigenspaces of Witten Laplacian.

\subsection{Deformed de Rham Cohomology}

Consider the following deformed exterior derivatives
\[d_t^{(q)}:\Omega^{q}(M)\rightarrow\Omega^{q+1}(M)\]
defined by
\[ d_t^{(q)} = e^{- t f} d^{(q)} e^{t f}, \quad \forall t \geqslant 0 \]
for all $q=0,\ldots,n$. If the content is clear, we will drop the upper index of $d_t^{(q)}$ and write $d_t$. We then get the deformed complex
\[ 0 \rightarrow \Omega^0 (M) \xrightarrow{d_t} \Omega^1 (M) \xrightarrow{d_t}
   \cdots \xrightarrow{d_t} \Omega^n (M) \rightarrow 0, \quad \forall t
   \geqslant 0. \]
Since $d^2_t = e^{- t f} d^2 e^{t f} = 0$, we can also define $H^{\bullet}_t
(M)$ the cohomology of this complex. Note that the defomation does not change
the cohomology data. To be precise,

\begin{thm}
  The map of multiplication
  \[ \begin{array}{cccc}
       e^{- t f} : & H^j (M) & \rightarrow & H^j_t (M)\\
       & u + \text{Im} d & \mapsto & e^{- t f} u + \text{Im} d_t
     \end{array} \]
  is a vector space isomorphism for all $j = 0, \cdots, n$.
\end{thm}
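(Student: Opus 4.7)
The plan is to observe that multiplication by $e^{-tf}$ is, for purely algebraic reasons, an isomorphism of complexes, from which the induced isomorphism on cohomology is immediate. I never have to invoke any analytic or topological fact about $M$; everything comes from the identity $d_t = e^{-tf} d\, e^{tf}$.

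First I would verify the key intertwining relation $d_t \circ e^{-tf} = e^{-tf} \circ d$ as maps $\Omega^q(M) \to \Omega^{q+1}(M)$. This is a one-line check from the definition: $d_t(e^{-tf} u) = e^{-tf} d(e^{tf} e^{-tf} u) = e^{-tf} (du)$. Hence the family of multiplication maps $E_t^{(q)} : \Omega^q(M) \to \Omega^q(M)$, $u \mapsto e^{-tf} u$, defines a morphism of cochain complexes from $(\Omega^\bullet(M), d)$ to $(\Omega^\bullet(M), d_t)$. Because $e^{-tf}$ is a nowhere-vanishing smooth function, each $E_t^{(q)}$ is a linear isomorphism of $\Omega^q(M)$ with explicit inverse $v \mapsto e^{tf} v$; by symmetry the inverse also intertwines $d_t$ and $d$. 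So $E_t^\bullet$ is an isomorphism of cochain complexes.

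From this, the induced map on cohomology classes, $[u] \mapsto [e^{-tf} u]$, is automatically well defined (if $u = dv$ then $e^{-tf} u = d_t(e^{-tf} v) \in \operatorname{Im} d_t$) and bijective, with inverse $[w] \mapsto [e^{tf} w]$. To make the write-up completely explicit for an undergraduate reader, I would spell out well-definedness, injectivity (if $e^{-tf} u = d_t w$ then $u = d(e^{tf} w)$, so $[u] = 0$ in $H^j(M)$), and surjectivity (given $[w] \in H^j_t(M)$ with $d_t w = 0$, we have $d(e^{tf} w) = 0$, and the class $[e^{tf} w] \in H^j(M)$ maps to $[w]$). There is no genuine obstacle here; the only thing to be a little careful about is not to confuse $e^{-tf}$ acting on forms (a pointwise scalar multiplication) with the operator-theoretic conjugation appearing in the definition of $d_t$, and I would state this explicitly before launching into the verification.
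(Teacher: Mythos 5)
Your proposal is correct and follows essentially the same route as the paper: the explicit checks of well-definedness, injectivity, and surjectivity you describe are precisely the three bullet points of the paper's proof. The only difference is cosmetic — you package the computation as the statement that $e^{-tf}$ is an isomorphism of cochain complexes intertwining $d$ and $d_t$, which is a slightly cleaner way of organizing the same one-line identities.
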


\begin{proof}
  Fix $j = 0, \cdots, n$. It suffices to show $e^{- t f}$ is a well-defined
  bijection because linearity is quite obvious.
  \begin{itemize}
    \item \textbf{$e^{- t f}$ is a well-defined map:} let
    \[ u \in \text{Im} (d : \Omega^{j - 1} (M) \rightarrow \Omega^j (M)), \]
    i.e., $u = d v$ for some $v \in \Omega^{j - 1} (M)$. Since
    \[ e^{- t f} u = e^{- t f} d v = d_t (e^{- t f} v) \]
    and $e^{- t f} v \in \Omega^{j - 1} (M)$,
    \[ e^{- t f} u \in \text{Im} (d_t : \Omega^{j - 1} (M) \rightarrow
       \Omega^j (M)) . \]
    \item \textbf{$e^{- t f}$ is injective:} assume $e^{- t f} u \in \text{Im} d_t$,
    i.e., $e^{- t f} u = d_t v$ for some $v \in \Omega^{j - 1} (M)$. Then
    \[ u = d (e^{t f} v) \in \text{Im} d. \]
    \item \textbf{$e^{- t f}$ is surjective:} let $u + \text{Im} d_t \in H^j (M)$.
    Consider $e^{t f} u + \text{Im} d$. Since
    \[ d (e^{t f} u) = e^{t f} (d_t u) = 0, \]
    $e^{t f} u + \text{Im} d \in H^j_t (M)$. Also, $e^{- t f}$ maps
    $e^{\text{tf}} u + \text{Im} d$ to $u + \text{Im} d_t$.
  \end{itemize}
\end{proof}

In particular, this theorem tells us
\[ \dim H_t^q (M) = \dim H^q(M) = b_q \]
for all $0\leq q\leq n$ and $t \geq 0$.

\subsection{Witten Laplacian}
Let 
\[d_t^{(q)*}:\Omega^{q+1}(M)\rightarrow\Omega^{q}(M)\]
be the formal adjoint of $d_t^{(q)}:\Omega^{q}(M)\rightarrow\Omega^{q+1}(M)$ with respect to $(\cdot,\cdot)$, i.e.,
\[
  (d_t^{(q)} u,v)=(u, d_t^{(q)*} v)
\]
for all $u\in \Omega^q(M)$ and $v\in \Omega^{q+1}(M)$. We call the operator
\[
  \triangle_t^{(q)}:=d_t^{(q)}d_t^{(q)*}+d_t^{(q)*}d_t^{(q)}:\Omega^q(M)\rightarrow\Omega^q(M)
\]
the \textbf{Witten Laplacian}. Note that this operator also depends on the Riemannian metric $g$.
   
It has the local form stated in the following theorem.
\begin{thm}[Bocher type formula]
  Let $p\in M$. Define
  \[
    \Lambda^\bullet T^*_pM:=\bigoplus_{k=0}^n\Lambda^k T^*_pM
  \]
  Then for each $\omega\in \Lambda^\bullet T^*_pM$,
  \begin{align}\label{E:Bochner}
    \triangle_t\omega=\triangle \omega + t|df|^2 \omega+t\sum_{k,l}\text{Hess}_f(\frac{\partial}{\partial x_l},\frac{\partial}{\partial x_k})[dx_l\wedge,dx_k\lrcorner]\omega.
  \end{align}
\end{thm}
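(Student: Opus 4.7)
The plan is to unpack $d_t = e^{-tf} d e^{tf}$ into an explicit zeroth-order perturbation of $d$, dualize to obtain $d_t^*$, then expand $\triangle_t = d_t d_t^* + d_t^* d_t$ in powers of $t$ and match each piece against the right-hand side of \eqref{E:Bochner}.

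First I apply the graded Leibniz rule: for any $\omega\in\Omega^q(M)$,
\[
d_t\omega = e^{-tf} d(e^{tf}\omega) = d\omega + t\, df\wedge\omega,
\]
so $d_t = d + t(df\wedge)$. Taking the $L^2$-adjoint and using that the pointwise adjoint of exterior multiplication by a $1$-form $\alpha$ is the interior multiplication $\alpha\lrcorner$ (after identifying $\alpha$ with its $g$-dual vector field), I obtain $d_t^* = d^* + t(df\lrcorner)$. Substituting into $\triangle_t = d_t d_t^* + d_t^* d_t$ and collecting by powers of $t$ gives
\[
\triangle_t = \triangle + t\bigl(\{d, df\lrcorner\} + \{d^*, df\wedge\}\bigr) + t^2 \{df\wedge, df\lrcorner\},
\]
where $\{A, B\} := AB + BA$. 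The $t^2$-coefficient is immediately evaluated by the pointwise Clifford-type identity $(df\wedge)(df\lrcorner) + (df\lrcorner)(df\wedge) = |df|^2 \cdot \mathrm{Id}$, which follows by bilinearity from the elementary relation $\{dx_l\wedge, dx_k\lrcorner\} = \delta_{kl}$.

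The heart of the argument is computing the $t$-coefficient. Fixing $p \in M$ and passing to normal geodesic coordinates centred at $p$, we have $\nabla_{\partial/\partial x_i}(dx_j)(p) = 0$ and $df\lrcorner|_p = \sum_k \frac{\partial f}{\partial x_k}(p)\, dx_k\lrcorner$. Substituting the local expressions of Lemmas \ref{lem:d-local} and \ref{lem:d^*-local} into the two anticommutators and applying the graded Leibniz rules for $dx_i\wedge$ and $dx_i\lrcorner$, each bracket splits into two kinds of contributions: (i) terms where $\nabla_{\partial/\partial x_i}$ differentiates $\omega$, and (ii) terms where it differentiates $df$. The (i)-contributions in each bracket collapse to $\pm \nabla_{\nabla f}\omega$ via $\{dx_l\wedge, dx_k\lrcorner\} = \delta_{kl}$ and therefore cancel between the two brackets, while the (ii)-contributions produce second partials $\partial_l\partial_k f(p) = \mathrm{Hess}_f(\partial/\partial x_l, \partial/\partial x_k)(p)$ multiplied by products of $dx_l\wedge$ and $dx_k\lrcorner$. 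A final rearrangement using $dx_k\lrcorner\, dx_l\wedge = \delta_{kl} - dx_l\wedge\, dx_k\lrcorner$ together with the symmetry of the Hessian assembles these into $\sum_{k,l}\mathrm{Hess}_f(\partial/\partial x_l, \partial/\partial x_k)[dx_l\wedge, dx_k\lrcorner]$, matching \eqref{E:Bochner}.

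The main obstacle is this final bookkeeping step: correctly tracking the $\delta_{kl}$ contributions from the graded interior product rule, verifying that the $\nabla\omega$-pieces produced by the two anticommutators really cancel against one another, and checking that the surviving Hessian combination assembles into the commutator $[dx_l\wedge, dx_k\lrcorner]$ rather than the anticommutator. Working in normal coordinates at $p$ is the essential simplification, since it annihilates all $\nabla dx_j$ contributions and reduces $\nabla_{\partial/\partial x_i}$ to ordinary partial differentiation on scalar coefficients, leaving only the Hessian as a genuine residue.
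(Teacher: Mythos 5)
Your proposal is correct and follows essentially the same route as the paper: write $d_t=d+t\,df\wedge$ and $d_t^*=d^*+t\,df\lrcorner$, expand $\triangle_t$ in powers of $t$, identify the $t^2$ term with $|df|^2$ via the Clifford relation, and compute the $t$-coefficient in normal geodesic coordinates using the local formulas for $d$ and $d^*$, where the first-order terms cancel and the Hessian terms assemble into the commutator. No substantive difference from the paper's argument.
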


\begin{xrem}
\begin{enumerate}[(i)]
  \item We define $|df|^2:=g(df,df)$ and 
  \begin{align}\label{def:Hess}
  \text{Hess}_f:=\nabla df\in \Gamma(T^* M\otimes T^*M).
  \end{align}
  The latter is said to be the \textbf{Hessian of $f$}. If $p\in M$ is a critical point of $f$, then $d^2f(p)=\text{Hess}_f(p)$ (see Definition 2.1 for the meaning of $d^2f$). A justification is given in the following: let $\{\frac{\partial}{\partial x_i}\}_{i=1}^n$ be a basis for $T_p M$. Then
  \[\text{Hess}_f(\frac{\partial}{\partial x_j},\frac{\partial}{\partial x_k})(p)=\frac{\partial^2 f}{\partial x_j\partial x_k}(p)-\frac{\partial f}{\partial x_l}(p)\Gamma_{jk}^l(p)=\frac{\partial^2 f}{\partial x_j\partial x_k}=d^2 f(\frac{\partial}{\partial x_j},\frac{\partial}{\partial x_k})(p)\]
  by the definition of critical point and formula (\ref{E:d^2f}). $[dx_l\wedge,dx_k\lrcorner]$ is simply a shorthand of $dx_l\wedge dx_k\lrcorner-dx_k\lrcorner dx_l\wedge$. As a result, there is no harm to called both $d^2 f$ and $\text{Hess}_f$ Hessian.
  \item By the local formula above and simple calculations, we notice that $\triangle^{(q)}_t$ is an positive elliptic operator with the symbol 
\[g^{ij}\xi_i\xi_j.\] 
It has an self-adjoint extension
\[\triangle^{(q)}_t: \text{Dom}\triangle_t^{(q)}\subseteq L^2(M)\rightarrow L^2(M),\]
which we use the same notation. The space $\text{Dom}\triangle_t^{(q)}$ is defined by
\[\text{Dom}\triangle_t^{(q)}=\{u\in L^2_q(M):\triangle_t^{(q)}u\in L^2_q(M)\}.\]

Following by \cite[theorem 8.3]{Shubin}, there exists a complete orthonormal system $\{f_j\}$ consisting of eigenfunctions of $\triangle_t$. Here, $f_j$ are all smooth, $\triangle_t f_j=\lambda_j f_j$ and $\{\lambda_j\}$ is a non-negative sequence tends to $+\infty$. Also, the spectrum $\sigma(\triangle_t)$ coincides with the set of all and eigenvalues. Note that for any $\lambda\in\mathbb{R}$, $\lambda\in\sigma(\triangle_t)$ if and only if the one of the three conditions holds:
\begin{enumerate}
  \item $\triangle_t-\lambda I$ is not injective (in this case, we call $\lambda$ an \textbf{eigenvalue}),
  \item $\triangle_t-\lambda I$ is not surjective,
  \item $\triangle_t-\lambda I$ is bijective, but it's inverse 
  \[(\triangle_t-\lambda I)^{-1}:L^2_q(M)\rightarrow L^2_q(M)\]
is not continuous (with respect to $\|\cdot\|$).
\end{enumerate}
\end{enumerate}
\end{xrem}

\begin{proof}[Proof of Bochner Type Formula]

Firstly, note that 
\begin{align}\label{E:dt}
  d_t=d+tdf\wedge
\end{align}
since for all $u\in \Omega^{\bullet}(M)$, 
\begin{align}
  d_t u = e^{-tf}d(e^{tf}u)=e^{-tf}e^{tf}t df\wedge u+e^{-tf}e^{tf}du=(d+tdf\wedge)u.
\end{align}

Next, formula (\ref{E:dt}) yields 
\begin{align}\label{E:dt*}
  d_t^*=d^*+tdf\lrcorner
\end{align}
because for all $u\in \Omega^{\bullet}(M)$ and $v\in \Omega^{\bullet+1}(M)$,
\begin{align}
\begin{split}
  (u,(d^*+tdf\lrcorner)v)_g =(u,d^*v)_g+(u,tdf\lrcorner v)=(du,v)+(tdf\wedge u,v) \\ 
  =((d+tdf\wedge)u,v)=(d_tu,v)=(u,d_t^*v).
\end{split}
\end{align}

  From formula (\ref{E:dt}) and (\ref{E:dt*}), we may derive that for all $u\in\Lambda^{\bullet}T_p^*M$,
  \begin{align}
  \begin{split}
    \triangle_t u=(dd^*+d^*d)u+t(df\wedge d^*u+d (df\lrcorner u)+df\lrcorner du+d^*(df\wedge u)) \\
  +t^2(df\wedge df\lrcorner+df\lrcorner df\wedge)u.
  \end{split}
  \end{align}

  Since the formula (\ref{E:Bochner}) is tensorial, we may verify the formula pointwise. Recall the exponential map 
  \[exp_p:B_0(r)\rightarrow M\]
is a diffeomorphism onto its image ($0$ is the origin of $\mathbb{R}^n$ and $r$ is a sufficiently small positive real number). It yields a local chart $(U,\varphi,x)$ for $M$ around $p$, called the \textbf{normal geodesic coordinate}. In the normal geodesic coordinate, the components of the Riemannian metric on $T_p M$ $g_{ij}(p)=g(\frac{\partial}{\partial x_i},\frac{\partial}{\partial x_j})$ simplify to $\delta_{ij}$. Thus, this implies 
\[
  \{dx_{i_1}\wedge\cdots\wedge dx_{i_k}:i_1<\cdots<i_k \text{ and }k=0,\ldots,n\}
\] 
is an orthonormal basis for $\Lambda^\bullet T^*_p M$.
  
Note that the both sides of formula (\ref{E:Bochner}) are invariant under coordinate change, it suffices for us to check it is true under normal coordinate. Furthermore, we only need to check those $\omega$ that are basis for $\Lambda^\bullet T^*_p M$. Write 
  \[df=\sum_{a=1}^n\frac{\partial f}{\partial x_a} dx_a\quad\text{and}\quad \]
where $dx_I=\omega=dx_{i_1}\wedge\cdots\wedge dx_{i_k}$ ($I=\{1\leq i_1<\cdots<i_k\leq n\}$). Then 
\begin{align}
\begin{split}
  (df\wedge df\lrcorner+df\lrcorner df\wedge)dx_I=\frac{\partial f}{\partial x_a} dx_a\wedge \frac{\partial f}{\partial x_b} \frac{\partial}{\partial x_b} dx_I+\frac{\partial f}{\partial x_a} \frac{\partial}{\partial x_a} \frac{\partial f}{\partial x_a} dx_a\wedge dx_I \\
  =\left(\frac{\partial f}{\partial x_a}\right)^2\left( (-1)^l\delta^a_Idx_a\wedge dx_{I-\{a\}}+\frac{\partial}{\partial x_a}(1-\delta^a_I) dx_a\wedge dx_I \right) \\
  =\left(\frac{\partial f}{\partial x_a}\right)^2 \left( (-1)^{2l}\delta^a_Idx_I+(1-\delta^a_I)dx_I \right)=\left(\frac{\partial f}{\partial x_a}\right)^2 dx_I=|df|^2 dx_I.
\end{split}
\end{align}
where $\delta^a_I=1$ if $a\in I$, $\delta^a_I=0$ otherwise. If $a\in I$, then $l$ is the integer so that $i_l=a$ and $dx_{I-\{a\}}$ equals to $dx_{i_1}\wedge\cdots\wedge \hat{dx_{i_l}}\wedge\cdots\wedge dx_{i_k}$.

Finally, also in the normal coordinate near $p$, we calculate the ''coefficient'' of $t$ in formula (\ref{E:Bochner}). Assume near $p$, $u$ can be written as
\[
  u=\sum_{i_1<\cdots<i_q} u_{i_1\cdots i_q} dx_{i_1}\wedge\cdots\wedge dx_{i_q}.
\]
Applying lemmas \ref{lem:d-local} and \ref{lem:d^*-local} we get
\begin{align}\label{E:3.8}
\begin{split}
  df\wedge d^*u=\sum_{i=1}^n \frac{\partial f}{\partial x_j}dx_j\wedge(-\sum_{i=1}^n dx_i\lrcorner\nabla_{\frac{\partial}{\partial x_i}}u) \\
  =-\sum_{i,j=1}^n\sum_{i_1<\cdots<i_q} \frac{\partial f}{\partial x_j}\frac{\partial u_{i_1\cdots i_q}}{\partial x_i} dx_j\wedge dx_i\lrcorner dx_{i_1}\wedge\cdots\wedge dx_{i_q},
\end{split}
\end{align}
\begin{align}\label{E:3.9}
\begin{split}
  df\lrcorner du=\sum_{i=1}^n \frac{\partial f}{\partial x_j}dx_j\lrcorner(\sum_{i=1}^n dx_i\wedge\nabla_{\frac{\partial}{\partial x_i}}u) \\
  =\sum_{i,j=1}^n\sum_{i_1<\cdots<i_q} \frac{\partial f}{\partial x_j}\frac{\partial u_{i_1\cdots i_q}}{\partial x_i} dx_j\lrcorner dx_i\wedge dx_{i_1}\wedge\cdots\wedge dx_{i_q},
\end{split}
\end{align}
\begin{align}\label{E:3.10}
\begin{split}
  ddf\lrcorner u=d\sum_{i=1}^n\sum_{i_1<\cdots< i_q} \frac{\partial f}{\partial x_i} u_{i_1\cdots i_q} dx_i\lrcorner dx_{i_1}\wedge\cdots\wedge dx_{i_q} \\
  \sum_{i,j=1}^n \frac{\partial^2 f}{\partial x_j \partial x_i} dx_j\wedge dx_i\lrcorner (\sum_{i_1<\cdots< i_q}u_{i_1\cdots i_q}dx_{i_1}\wedge\cdots\wedge dx_{i_q}) \\
  +\sum_{i,j=1}^n\sum_{i_1<\cdots< i_q} \frac{\partial f}{\partial x_j} \frac{\partial u_{i_1\cdots i_q}}{\partial x_i} dx_j\wedge dx_i\lrcorner dx_{i_1}\wedge\cdots\wedge dx_{i_q},
\end{split}
\end{align}
and
\begin{align}\label{E:3.11}
\begin{split}
  d^*df\wedge u=d\sum_{i=1}^n\sum_{i_1<\cdots< i_q} \frac{\partial f}{\partial x_i} u_{i_1\cdots i_q} dx_i\wedge dx_{i_1}\wedge\cdots\wedge dx_{i_q} \\
  -\sum_{i,j=1}^n \frac{\partial^2 f}{\partial x_j \partial x_i} dx_j\lrcorner dx_i\wedge (\sum_{i_1<\cdots< i_q}u_{i_1\cdots i_q}dx_{i_1}\wedge\cdots\wedge dx_{i_q}) \\
  -\sum_{i,j=1}^n\sum_{i_1<\cdots< i_q} \frac{\partial f}{\partial x_j} \frac{\partial u_{i_1\cdots i_q}}{\partial x_i} dx_j\lrcorner dx_i\wedge dx_{i_1}\wedge\cdots\wedge dx_{i_q}.
\end{split}
\end{align}

Summing up formulas (\ref{E:3.8}) to (\ref{E:3.11}), we get the coefficient of $t$ equals to
\begin{align}\label{E:3.12}
\begin{split}
  df\wedge d^*u+df\lrcorner du+ddf\lrcorner u+d^*df\wedge u \\
  =\sum_{i,j=1}^n\frac{\partial^2 f}{\partial x_i\partial x_j}dx_i\wedge dx_j\lrcorner u-\sum_{i,j=1}^n\frac{\partial^2 f}{\partial x_i\partial x_j}dx_i\lrcorner dx_j\wedge u \\
  =\sum_{i,j=1}^n\frac{\partial^2 f}{\partial x_i\partial x_j}dx_i\wedge dx_j\lrcorner u-\sum_{i,j=1}^n\frac{\partial^2 f}{\partial x_j\partial x_i}dx_i\lrcorner dx_j\wedge u \\
  = \sum_{i,j=1}^n\frac{\partial^2 f}{\partial x_i\partial x_j}[dx_i\wedge,dx_j\lrcorner]\wedge u\\
\end{split}  
\end{align}
because $f$ is smooth so that 
\[
  \frac{\partial^2 f}{\partial x_i\partial x_j}=\frac{\partial^2 f}{\partial x_j\partial x_i}
\]
for all $i,j\in\{1,\ldots,n\}$. The notation $[dx_i\wedge,dx_j\lrcorner]$ equals to $dx_i\wedge dx_j\lrcorner-dx_j\lrcorner dx_i\wedge$ for all $i,j\in\{1,\ldots,n\}$.

In the normal coordinate, we have $\Gamma_{ij}^k(p)=0$. Thus,
  \[\text{Hess}_f(\frac{\partial}{\partial x_i},\frac{\partial}{\partial x_j})=\frac{\partial^2 f}{\partial x_i\partial x_j}(p)-\frac{\partial f}{\partial x_k}(p)\Gamma_{ij}^k(p)=\frac{\partial f}{\partial x_i \partial x_j}
  \] 
and so the last line of formula (\ref{E:3.12}) equals to
\begin{align}
\begin{split}
  \sum_{i,j=1}^n \text{Hess}_f(\frac{\partial}{\partial x_i},\frac{\partial}{\partial x_j})[dx_i\wedge,dx_j\lrcorner]u.
\end{split}
\end{align}
This expression is invariant under change of coordinates, and so we complete the derivation of Bochner formula.
\end{proof}

Denote 
\[
  E^r_{\mu, t} (M)=\{u\in\Omega^r(M):\triangle_t^{(r)}u=\mu u\}
\]
as the eigenspace of $\mu$ with respect to $\triangle_t^{(r)}$ in $\Omega^r(M)$.
Note that the deformed exterior derivative 
\[d_t : E^r_{\mu, t} (M) \rightarrow E_{\mu, t}^{r + 1} (M)\]
sends $r$-forms from $E^r_{\mu, t} (M)$ to $E_{\mu, t}^{r + 1} (M)$, for
$r = 0, \ldots, n - 1$. Indeeds, let $u \in E_{\mu, t}^r (M)$, i.e.,
$\triangle_t^{(r)} u = \mu u$ for some $\mu \in \mathbb{R}$. Then we have
\[\triangle_t^{(r)} (d_t u)  =  (d_t d^*_t + d^*_t d_t) (d_t u)=  d_t d^*_t d_t u =  d_t d^*_t d_t u + d_t d_t d^*_t u =  d_t (\triangle_t^{ (r)} u) = \mu d_t u.\]

Let $I \subset \mathbb{R}$ be a bounded set and we denote
\[ \begin{array}{l}
     E_{I, t}^q (M) := \bigoplus_{\mu \in \text{Spec} \triangle^{(q)}_t \cap
     I} E_{\mu, t}^q (M) .
   \end{array} \]
\begin{thm}\label{thm:exact-eigenspace-sequence}
  The following sequence of vector spaces
  \[ E^0_{(0, \lambda], t} (M) \xrightarrow{d^{(0)}_t} E^1_{(0, \lambda], t}
     (M) \xrightarrow{d^{(1)}_t} E^2_{(0, \lambda], t} (M)
     \xrightarrow{d^{(2)}_t} \cdots \xrightarrow{d^{(n - 1)}_t} E^n_{(0,
     \lambda], t} (M) \xrightarrow{d^{(n)}_t} 0 \]
  is exact.
\end{thm}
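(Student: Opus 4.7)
The plan is to exploit the positivity of the eigenvalues in $(0,\lambda]$ together with the Hodge-style identity $\triangle_t = d_t d_t^* + d_t^* d_t$ in order to invert $d_t$ on closed forms. Since the spectrum of $\triangle_t^{(q)}$ is discrete and $E^q_{(0,\lambda],t}(M)$ decomposes as a finite orthogonal direct sum
\[
E^q_{(0,\lambda],t}(M) = \bigoplus_{\mu \in \text{Spec}\,\triangle^{(q)}_t \cap (0,\lambda]} E^q_{\mu,t}(M),
\]
and since $d_t$ has already been shown to preserve each summand, I would reduce the whole statement to verifying exactness of the sub-sequence obtained by fixing a single eigenvalue $\mu > 0$, and then take direct sums.

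The first technical step is the mirror of the fact already established above for $d_t$: that $d_t^*$ also sends $E^q_{\mu,t}(M)$ into $E^{q-1}_{\mu,t}(M)$. This follows from $d_t^2 = 0$ (hence $(d_t^*)^2 = 0$), since for $u$ with $\triangle_t u = \mu u$ we have
\[
\triangle_t(d_t^* u) = (d_t d_t^* + d_t^* d_t)(d_t^* u) = d_t^* d_t d_t^* u = d_t^* \triangle_t u = \mu\, d_t^* u.
\]
With this in hand, fix $\mu > 0$ and take $u \in E^q_{\mu,t}(M)$ with $d_t u = 0$. The Hodge identity immediately gives
\[
\mu u = \triangle_t u = d_t d_t^* u + d_t^* d_t u = d_t d_t^* u,
\]
so $v := \mu^{-1} d_t^* u \in E^{q-1}_{\mu,t}(M) \subseteq E^{q-1}_{(0,\lambda],t}(M)$ satisfies $d_t v = u$. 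Combined with the trivial inclusion $\text{Im}\, d_t \subseteq \ker d_t$ coming from $d_t^2 = 0$, this yields exactness at $E^q_{(0,\lambda],t}(M)$ for each $1 \leq q \leq n$. Exactness at the terminal $0$ is just the statement that $d_t^{(n-1)}$ surjects onto $E^n_{(0,\lambda],t}(M)$, which follows from the same Hodge inversion since every top-degree form is automatically $d_t$-closed.

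The main delicate point — not really an obstacle, but the place where one must be careful — is that the hypothesis $\mu > 0$ is essential: at $\mu = 0$ the identity $\mu u = d_t d_t^* u$ collapses and the inversion $v = \mu^{-1} d_t^* u$ is meaningless. This is precisely why the harmonic eigenspace $E^q_{0,t}(M)$ is excluded from the sequence, and it foreshadows the strategy of the sections that follow, in which $b_q$ will be identified with $\dim E^q_{0,t}(M)$ while the positive-eigenvalue eigenspaces will contribute only cancelling dimensions to the Morse inequalities.
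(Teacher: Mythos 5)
Your proof is correct and follows essentially the same route as the paper: decompose into eigenspaces for positive eigenvalues, use $\mu u=\triangle_t u=d_td_t^*u$ on $d_t$-closed eigenforms, and write $u=d_t(\mu^{-1}d_t^*u)$. Your explicit check that $d_t^*$ maps $E^q_{\mu,t}(M)$ into $E^{q-1}_{\mu,t}(M)$ is a detail the paper leaves implicit (it is needed to know the preimage actually lies in $E^{q-1}_{(0,\lambda],t}(M)$), so if anything your write-up is slightly more complete.
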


\begin{proof}
  Fix $r = 0, \ldots, n$. We need to show $\ker d^{(r)}_t = \text{im} d^{(r -
  1)}_t .$
  \begin{itemize}
    \item [$(\supseteq)$:] Let $u \in \text{im} d^{(r - 1)}_t$. Then $u = d_t v$
    for some $v \in E^{r - 1}_{(0, \lambda], t} (M)$. Since $d^2_t = 0$, we
    must have $d_t u = d^2_t v = 0$. This gives $u \in \ker d^{(r)}_t$.
    
    \item [$(\subseteq)$:] Let $u \in \ker d^{(r)}_t$. We may write
    \[ u = u_1 + \cdots + u_N \]
    where $u_j \in E_{\mu_j, t}^r (M)$, $\mu_j$ distinct real positive numbers.
    Applying $d_t$ on both sides of the equation above, we get
    \[ 0 = d_t u = d_t u_1 + \cdots + d_t u_N . \]
    Since each $d_t u_j$ belongs to different eigenspace, $\{ d_t u_j \}$ is
    linearly independent. This implies $d_t u_j = 0$ for $j = 1, \ldots, N$.
    Fixing $j$, we have
    \[ u_j = \frac{1}{\mu_j} \triangle^{(r)}_t u_j  =  \frac{1}{\mu_j} (d_t d^*_t + d^*_t d_t) u_j = \frac{1}{\mu_j} d_t d^*_t u_j =  d_t \left( \frac{1}{\mu_j} d^*_t u_j \right) . \]
    Summing $u_j$ together, we get
    \[ u = d_t \left( \frac{1}{\mu_1} d^*_t u_1 + \cdots +
       \frac{1}{\mu_N} d^*_t u_N \right) \in \text{im} d^{(r-1)}_t . \qedhere\]
  \end{itemize} 
\end{proof}

Note that the spaces $E_{(0, \lambda], t}^q (M)$ are finite-dimensional for
all $q = 0, \ldots, n$ since each of them is a finite direct sum of
eigenspaces, and the eigenspaces of Witten Laplacian is finite-dimensional.
Using Theorem \ref{thm:exact-eigenspace-sequence} and dimension theorem for vector spaces, we then have
\[ \dim\text{im}\,d^{(q)}_t = \dim E^q_{(0, \lambda], t} (M) - \dim\ker d^{(q)}_t = \dim E^q_{(0, \lambda], t} (M) - \dim\text{im}\, d^{(q - 1)}_t \]
for $q = 1, \ldots, n$. For the case $q=0$, observe that $\ker d_t^(0)$ is a trivial vector space. For any $u\in \ker d_t^{(0)}$, we can write 
\[
  \alpha=\alpha_1+\cdots+\alpha_N
\]
where $\alpha_j\in E^0_{\mu_j,t}(M)$, $\mu_j$ distinct real positive numbers. Applying the Laplacian on both sides we get
\[
  \triangle_t^{(0)}\alpha=\mu_1\alpha_1+\cdots+\mu_N \alpha_N
\]
But since 
\[
  \triangle_t^{(0)}\alpha=d_t^{*(0)}d_t^{(0)}\alpha=0,
\] 
we must have $u_1=\cdots=u_N=0$ by linear independence of distinct eigenspaces. Thus, $\alpha=0$, i.e., $\ker d_t^{(0)}=0$.

and $\dim\text{im} d^{(0)}_t = \dim E_{(0, \lambda],
t}^0 (M)$. Then we may derive the following alternating sum
\begin{align}\label{E:3.13}
 \sum^q_{j = 0} (- 1)^{q - j} \dim E_{(0, \lambda], t}^q (M) = \dim\text{im}
   d^{(q)}_t \geq 0.
\end{align}   
Moreover, the equation holds when $q = n$, since $\text{rank} d^{(q)}$
vanishes as $q = n$. As a result, we obtain estimations encountering $\dim
H^j_t (M)$, and these estimations play important rules in the next section of
proving Morse inequalities.

\begin{thm}
  For any positive real number $\lambda>0$ and $q = 0, \ldots, n$, we have
  \begin{align}\label{E:3.15.2}
    \dim H^q_t (M) \leq \dim E^q_{[0, \lambda], t} (M)
  \end{align}
  and
  \begin{align}\label{E:3.15}
    \sum^q_{j = 0} (- 1)^{q - j} \dim H^j_t (M) \leq \sum^q_{j=0} (- 1)^{q - j} \dim E_{[0, \lambda], t}^q (M).
  \end{align}
  The equality of (\ref{E:3.15}) holds when $q=n$.
\end{thm}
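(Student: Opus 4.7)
The strategy is to peel off the zero eigenspace and reduce everything to Theorem \ref{thm:exact-eigenspace-sequence} together with the Hodge-type identification
\[
   \dim E^q_{\{0\},t}(M) \;=\; \dim \ker \triangle_t^{(q)} \;=\; \dim H^q_t(M).
\]
Granting this for a moment, orthogonality of distinct eigenspaces of the self-adjoint operator $\triangle_t^{(q)}$ yields the direct sum decomposition
\[
   E^q_{[0,\lambda],t}(M) \;=\; E^q_{\{0\},t}(M) \,\oplus\, E^q_{(0,\lambda],t}(M),
\]
so $\dim E^q_{[0,\lambda],t}(M)=\dim H^q_t(M)+\dim E^q_{(0,\lambda],t}(M)$. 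The weak inequality (\ref{E:3.15.2}) is then immediate, and forming the alternating sum and invoking (\ref{E:3.13}) (which is nonnegative for general $q$ and an equality at $q=n$) gives (\ref{E:3.15}) with equality at $q=n$.

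To establish the Hodge identification, I would proceed in two steps. First, the standard computation $(\triangle_t u,u)=\|d_t u\|^2+\|d_t^* u\|^2$ shows that $\ker \triangle_t^{(q)}$ consists exactly of those $u$ annihilated by both $d_t$ and $d_t^*$; in particular the map
\[
   \Phi:\ker \triangle_t^{(q)} \longrightarrow H^q_t(M), \qquad u \longmapsto u+\mathrm{im}\,d_t^{(q-1)}
\]
is well-defined. Injectivity follows from the usual pairing: if $u=d_t v$ is harmonic then $(u,u)=(d_t v,u)=(v,d_t^* u)=0$. For surjectivity, the spectral theorem quoted after the Bochner formula expands any closed smooth $u$ as $u=u_0+\sum_{\mu>0}u_\mu$ with $u_0$ harmonic and $u_\mu\in E^q_{\mu,t}(M)$. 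Exactly as in the proof of Theorem \ref{thm:exact-eigenspace-sequence}, each $u_\mu$ is exact via $u_\mu=d_t(\mu^{-1}d_t^* u_\mu)$, whence $u-u_0\in\mathrm{im}\,d_t^{(q-1)}$ and $\Phi(u_0)=[u]$.

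The main obstacle is making surjectivity fully rigorous when the eigen-expansion of $u$ is genuinely infinite: one must verify that the tail $\sum_{\mu>0}u_\mu$ is not merely $L^2$-exact but is $d_t$ of a smooth form. The clean way is to introduce the Green's operator $G_t:=\sum_{\mu>0}\mu^{-1}P_\mu$, where $P_\mu$ is the orthogonal projection onto $E^q_{\mu,t}(M)$; because the spectrum of $\triangle_t^{(q)}$ is discrete and accumulates only at $+\infty$, elliptic regularity ensures $G_t$ maps smooth forms to smooth forms, and one then checks $d_t(d_t^* G_t u)=u-u_0$ for closed $u$. Once this standard elliptic input is granted, everything else in the proof is the short linear-algebra computation sketched above.
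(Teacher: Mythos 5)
Your proof follows essentially the same route as the paper: identify $\dim H^q_t(M)$ with $\dim\ker\triangle_t^{(q)}$, split off the zero eigenspace so that $\dim E^q_{[0,\lambda],t}(M)=\dim\ker\triangle_t^{(q)}+\dim E^q_{(0,\lambda],t}(M)$, and combine with the nonnegativity (and vanishing at $q=n$) of the alternating sum (\ref{E:3.13}). If anything you are more careful than the paper on the Hodge step: the paper simply asserts that basis representatives of $H^q_t(M)$ satisfy $(v_i,d_tu)=0$ for all $u$, whereas you correctly isolate the surjectivity of $\ker\triangle_t^{(q)}\to H^q_t(M)$ as the point requiring the Green's operator and elliptic regularity.
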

\begin{proof} $ $
\begin{enumerate}
\item First, we proof that 
\[\dim H^q_t(M)=\dim \ker\triangle_t^{(q)}.\]
Let \{$v_1+\text{im}d_t^{(q-1)},\ldots,v_l+\text{im}d_t^{(q-1)}\}$ be the basis for $H^q_t(M)$. Fix $i=1,\ldots,l$. Then we can see
\begin{align*}
d_tv_i&=0 \\
(v_i,d_tu)&=0,\quad \forall u\in\Omega^{q-1}(M).
\end{align*}
As a consequence, for any $v\in\Omega^q(M)$,
\[(\triangle_t v_i,u)=(d_tv_i,d_tu)+(d_t^*v_i,d_t^*u)=0,\]
which means $v_i\in\ker\triangle_t^{(q)}$. Thus, $\dim H^q_t(M)\leq\dim \ker\triangle_t^{(q)}$. 

On the other hand, let $\{u_1,\ldots,u_{l'}\}$ be the basis for $\ker\triangle_t^{(q)}$. Fix $i=1,\ldots,l'$. For each $u\in \Omega^q(M)$,
\[
  0=(\triangle_t v_i,v_i)=(d_tv_i,d_tv_i)+(d^*_tv_i,d^*_tv_i),
\]
which implies that $d_tv_i=0$ and $d^*_tv_i=0$. As a result, $v_i+\text{im}d_t^{q-1}\in H^q_t(M)$, i.e., $\dim H^q_t(M)\geq\dim \ker\triangle_t^{(q)}$.
\item By the first part of proof, we can immediately prove (\ref{E:3.15.2}) since 
\begin{align*}
\dim E^q_{[0,\lambda],t}(M)=\dim(\ker\triangle_t^{(q)}\oplus E^q_{(0,\lambda],t}(M))=\dim\ker\triangle_t^{(q)}+E^q_{(0,\lambda],t}(M) \\
\geq \dim\ker\triangle_t^{(q)}=\dim H^q_t(M).
\end{align*}
\item Part 1 of this proof and inequality (\ref{E:3.13}) says that
\[\dim H^q_t(M)=\dim\ker\triangle_t^{(q)}\]
and
\[\sum^q_{j = 0} (- 1)^{q - j} \dim E_{(0, \lambda], t}^q (M)\geq 0\]
respectively. Then
\begin{align*}
\sum^q_{j=0} (- 1)^{q - j} \dim E_{[0, \lambda], t}^q (M)=\sum^q_{j=0} (- 1)^{q - j} (\dim E_{(0,\lambda],t}^q(M)+\dim\ker\triangle_t^{(q)}) \\
=\sum^q_{j=0} (- 1)^{q - j} \dim E_{(0, \lambda], t}^q (M)+\sum_{i=1}^q(- 1)^{q - j}\dim H^q_t(M) \\
\geq \sum_{i=1}^q(- 1)^{q - j}\dim H^q_t(M),
\end{align*}
ending the proof of formula (\ref{E:3.15}).
\end{enumerate}
\end{proof}

In the next section, we will prove the following theorem. If the theorem hold, plug the previous theorem and then we derive the Morse inequalities.

\begin{thm}
  There exist $C > 0$ and $t > 0$ such that when $t\geq t_0$,
  \[ \dim E^q_{[0,e^{- C t}], t} = m_q \]
  for all $q = 0, \ldots, n$.
\end{thm}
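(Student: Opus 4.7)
The plan is to establish the equality by proving matching inequalities in both directions, each exploiting the Bochner formula to localize the analysis to neighborhoods of the critical points of $f$.

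For the lower bound $\dim E^q_{[0,e^{-Ct}],t}\ge m_q$, I would construct $m_q$ approximate eigenforms, one concentrated at each critical point of index $q$. Given $p\in\text{Crit}(f;q)$, Morse's lemma supplies a chart in which $f$ is quadratic, and the Bochner formula reduces, up to lower order terms, to a standard harmonic oscillator on $\mathbb{R}^n$ acting on the exterior algebra. The explicit ground state --- of the form $e^{-t|x|^2/2}$ multiplied by the wedge of those $dx_i$ corresponding to negative Hessian eigendirections --- lies in the kernel of this model operator on the $q$-form sector. Multiplying by a smooth cutoff supported in the chart produces a compactly supported form $\omega_p$ on $M$. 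A direct calculation, using the Gaussian concentration of $\omega_p$ and the fact that derivatives of the cutoff vanish where $\omega_p$ has non-negligible mass, shows $\|\triangle_t\omega_p\|\le e^{-ct}\|\omega_p\|$ for some $c>0$, while disjointness of supports makes the family $\{\omega_p\}_{p\in\text{Crit}(f;q)}$ orthogonal. By the min-max principle, this produces $m_q$ eigenvalues of $\triangle_t^{(q)}$ below $e^{-Ct}$ for any $C<c$.

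For the upper bound $\dim E^q_{[0,e^{-Ct}],t}\le m_q$, I would first argue that any eigenform with eigenvalue below $e^{-Ct}$ concentrates near $\text{Crit}(f)$. The mechanism is again the Bochner formula: away from the critical set one has $|df|^2\ge\delta>0$, so for large $t$ the quadratic-in-$t$ potential dominates the bounded matrix term $t\,\text{Hess}_f\cdot[dx\wedge,dx\lrcorner]$, forcing the $L^2$ mass of any low eigenvalue mode into arbitrarily small balls around critical points. This is an IMS-type cutoff argument: with a partition of unity $1=\sum_p\chi_p^2+\chi_\infty^2$, the localization identity $\triangle_t=\sum_p\chi_p\triangle_t\chi_p+\chi_\infty\triangle_t\chi_\infty-\sum|\nabla\chi|^2$ gives the required control. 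Inside each ball $B_r(p)$, the rescaling $x\mapsto x/\sqrt t$ converts $\triangle_t^{(q)}$ into the flat harmonic oscillator on $\mathbb{R}^n$ plus an error of order $t^{-1/2}$. Invoking the explicit spectral computation carried out in Section 5 --- whose kernel on $q$-forms is one-dimensional at each index-$q$ critical point and whose first nonzero eigenvalue is bounded away from zero --- a min-max comparison then shows that at most $m_q$ eigenvalues of $\triangle_t^{(q)}$ can lie below $e^{-Ct}$.

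The main obstacle is the upper bound: the localization must be sharp enough that the model-operator comparison yields the exact count $m_q$ rather than merely an $O(1)$ estimate. Concretely, one must verify that every small-eigenvalue mode is, up to exponentially small error, a linear combination of the quasi-modes $\omega_p$ built above; equivalently, that the $L^2$-projection from $E^q_{[0,e^{-Ct}],t}(M)$ onto $\operatorname{span}\{\omega_p\}$ is injective for large $t$. This is the content the paper defers to the construction of linearly independent $q$-forms in Section 4, combined with the harmonic-oscillator spectral data from Section 5. The delicate calibration is matching the semiclassical remainder $O(t^{-1/2})$ of the rescaled operator against the exponentially thin window $e^{-Ct}$, which pins down how small $C$ must be chosen relative to the spectral gap of the model operator.
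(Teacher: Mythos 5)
Your proposal matches the paper's strategy: the same Gaussian quasi-modes $e^{-t|x|^2/2}\chi\,dx_1\wedge\cdots\wedge dx_q$ cut off in Morse charts for the lower bound (the paper packages their linear independence via a Gram-determinant perturbation of the projections $\tilde v_p$ rather than min-max, but this is cosmetic), and the same IMS localization plus harmonic-oscillator comparison for the upper bound, with the decisive step being exactly the coercivity estimate $(\triangle_t^{(q)}u,u)\geq Ct\|u\|^2$ on the orthogonal complement of the quasi-modes that you identify as the main obstacle. The one simplification you overlook is that the paper deliberately chooses the metric to be Euclidean and $f$ exactly quadratic on each Morse chart (Section 3.3), so the localized operator \emph{is} the model operator $\Box_t$ and the $O(t^{-1/2})$ semiclassical remainder you worry about calibrating against $e^{-Ct}$ never arises.
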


\subsection{Locally Flat Metric Near Critical Points}

Let $\tilde{g}$ be arbitrary Riemannian metric on $M$. Fix $p\in \text{Crit}(f)$. Let $(U_p,\phi)$ be the coordinate chart near $p$ given by Morse lemma (lemma \ref{lem:Morse}). We may assume $U_p\cap U_{p'}=\emptyset$ when $p\neq {p'}$. Because $\phi$ is a local diffeomorphism from $U$ to $\phi(U_p):=\tilde{U_p}$, we can define a Riemannian metric $g_p$ on $U$ by
\[g_p(u,v)(\hat{p})=\langle d\phi_{\hat{p}}(u),d\phi_{\hat{p}}(v)\rangle\]
for each $\hat{p}\in U$, $u,v\in T_{\hat{p}}M$, and  $\langle\cdot,\cdot\rangle$ denotes the standard Euclidean metric on $\mathbb{R}^n$. 

Denote $V=M-\cup_{p\in \text{Crit}(f)} U_p$. Then 
\begin{align}\label{E:OpenCovering}
S=\{U_p\}_{p\in \text{Crit(f)}}\cup\{V\}
\end{align}
is an open covering of $M$. Let $\{\varphi_p\}\cup\{\psi\}$ be a partition unity subordinate on $S$, where 
\begin{align*}
  \text{supp}\varphi_p &\subseteq U_p \\
  \text{supp}\psi &\subseteq V
\end{align*}
We define a new metric $g$ by 
\begin{align}\label{E:SpecialMetric}
  g=\tilde{g}\psi+\sum_{p\in \text{Crit(f)}}g_p\varphi_p
\end{align}

From now on, we fix $g$ to be the metric defined by (\ref{E:SpecialMetric}).

\subsection{Witten Laplacian on $\mathbb{R}^n$}

Let $p\in \text{Crit}(f;r)$. Let $(U,\varphi,x)$ be local chart $p$ given by Morse lemma \ref{lem:Morse}. Let $u\in C^\infty_0 (U)$ and $J=\{1\leq j_1<\cdots<j_q\leq n\}$. Then by our construction of $g$, the Bochner type formula of $\triangle_t$ on $U$ becomes
\begin{align}
  \triangle_t (u dx_J)=\left[-\triangle u(x)+t^2 |x|^2 u(x)\right] dx_J+ t u(x)\sum_{i=1}^n \varepsilon_j[dx_j\wedge,dx_j\lrcorner](dx_J).
\end{align}
where $dx_J=dx_{j_1}\wedge\cdots\wedge dx_{j_q}$,
\[
  \triangle = \frac{\partial^2}{\partial x_1^2}+\cdots+\frac{\partial^2}{\partial x_n^2} \qquad |x|^2=x_1^2+\cdots+x_n^2\] 
and 
\[
  \varepsilon_j=
  \begin{cases}
    -1   & \text{for }j\leq r \\
    1    & \text{for }j>r \\
  \end{cases}
\]

\section{A study on eigenspaces}

In this section, we will study the eigenspaces of $\triangle_t^{(q)}$. We will show that when $t$ is sufficiently large, there is exactly $m_q$ eigenvalues that are bounded by $e^{-Ct}$. Once we establish this, the proof of main theorem is complete by the reduction we performed last section.

\subsection{Norms on $\Omega^q(M)$}
For any $\omega\in \Omega^q(M)$, define
\begin{align}
  \|\omega\|_{\infty}=\sup_{p\in M}|\omega(p)|,
\end{align}
called the \textbf{sup norm of $\omega$} (we write $|\cdot|$ for $\sqrt{g(\cdot,\cdot)}$). Note that there is another way to define the sup norm of differential form. Let $\{(U_i,\varphi_i,x)\}_{i=1}^n$ be a set of charts that covers $M$. Assume further that the exterior bundle $\Lambda^q T^*M$ trivializes on each $U_i$ with the trivialization (composited with $\varphi_i\times id$):
\[\eta_i:\Lambda^q T^*M|_{U_i}\xrightarrow{\sim} \varphi_i^{-1}(U_i)\times\mathbb{R}^{\binom{n}{q}}.\]
We can define the induced map $\eta_{i*}:\Gamma(U_i,E|_{U_i})\rightarrow\Gamma(\varphi^{-1}(U_i),\mathbb{R}^{\binom{n}{q}})$ by
\[\eta_{i*}\xi=\eta_i\circ\xi\circ\varphi_i^{-1}.\]
Finally, let $\{\rho_i\}_{i=1}^N$ be a partition of unity subordinated to $\{U_i\}$.

With the notations prepared, we can then define the sup norm of $\omega$ by
\[\omega=\sum_{i=1}^{\binom{n}{q}}\sum_{i_1<\cdots< i_q}\|\omega_{i,i_1,\ldots,i_q}\|_{\infty,\mathbb{R}}\]
where $\|\cdot\|_{\infty,\mathbb{R}}$ is the sup norm on $\mathbb{R}$.

With a similar fashion, we can define the Sobolev $s$ norm on $\Omega^q(M)$ by
\[\|\omega\|_s=\sum_{i=1}^{\binom{n}{q}}\sum_{i_1<\cdots< i_q}\|\omega_{i,i_1,\ldots,i_q}\|_{s,\mathbb{R}}\]
where 
\[\eta_{i*}(\omega\varphi_i)=\sum_{i_1<\ldots<i_q}\omega_{i,i_1,\ldots,i_q}dx_{i_1}\wedge\cdots\wedge dx_{i_q}\]
and $\|\cdot\|$ is the Sobolev $s$ norm on $\mathbb{R}$ defined by 
\[\|f\|_{s,\mathbb{R}}^2=\sum_{|\alpha|\leq 2n} \|\partial^\alpha h\|^2_{L^2(\mathbb{R}^n)}\]

where $|\alpha|=\alpha_1+\cdots+\alpha_n$ and $\partial^{\alpha}=\frac{\partial^{\alpha_1}}{\partial x_1^{\alpha_1}}\cdots\frac{\partial^{\alpha_n}}{\partial x_n^{\alpha_n}}$.
is the $s$ norm on $\mathbb{R}$.

\subsection{Lower bound of $\dim E^q_{[0,e^{-Ct}],t}(M)$}

Let $p \in \text{Crit} (f, q)$ and $(U_p,\varphi, x)$ be a local chart such that $x(p) = 0$. Let $\kappa\in C^\infty_0(\mathbb{R})$ be a smooth function with compact support so that 
\begin{enumerate}
  \item $\text{supp}\kappa\subseteq [-2,2]$
  \item $\kappa(y)=1$ for all $x\in [-1,1]$
  \item $0\leq\kappa(y)\leq 1$ for all $x\in \mathbb{R}$.
\end{enumerate}
For any $\varepsilon >0$, set $\kappa_\varepsilon(y)=\kappa(\varepsilon^{-1}y)$. We may choose $\varepsilon$ small enough such that the cube centered at $0$ is contained in $\varphi(U_p)$ ,i.e.,
\[C_\varepsilon(0):=\{x\in\mathbb{R}:\sup_{1\leq i\leq n}|x_i|\leq 2\varepsilon\}\subseteq\varphi(U_p)\]

Now, define $\chi(x)=\kappa_\varepsilon(x_1)\cdots\kappa_\varepsilon(x_n)$ such that $\chi = 1$ near $p$.
Let $v \in \Omega_0^q (U_p) \subset \Omega^q (M)$, where $\Omega_0^q (U_p)$ means the set of
smooth $q$-forms with compact support in $U_p$ defined by
\begin{align}\label{E:localexponential}
  v\circ\varphi^{-1} (x) = e^{- \frac{t}{2} | x |^2} \chi (x) d x^1 \wedge d x^2 \wedge \cdots
   \wedge d x^q .
\end{align}
The following lemma gives a upper bound of the $L^2$ norm of
$(\triangle_t^{(q)})^j v$

\begin{lem}\label{lem:LaplacianBound}
  For all $s \in \mathbb{Z}^+$, there exist $t_s > 0$, depending on $s$, such that when $t \geq t_s$, we have
  \[ \| (\triangle_t^{(q)})^s v \| \leq \exp(\frac{-t\varepsilon^2}{4}) . \]
\end{lem}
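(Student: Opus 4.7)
The plan is to compute $\triangle_t v$ directly in the Morse chart around $p$ and exploit a cancellation that leaves only terms supported where $\chi$ is non-constant, and then to iterate this calculation. In the chart $(U_p,\varphi,x)$, $v$ has the explicit form (\ref{E:localexponential}), and the local Bochner-type formula from section 3.4 applies. Because $p$ has index $q$, the relevant $q$-form has $dx_J = dx_1\wedge\cdots\wedge dx_q$; a direct calculation of $\sum_{j=1}^{n}\varepsilon_j[dx_j\wedge,dx_j\lrcorner](dx_J)$ gives $-n\,dx_J$ (the two parts $-q$ from $j\in J$ and $-(n-q)$ from $j\notin J$ combine). Hence
$$\triangle_t(u\,dx_J) = \bigl[-\triangle u + t^2|x|^2 u - tn\,u\bigr]dx_J.$$

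Applying this with $u=\chi(x)e^{-t|x|^2/2}$ and expanding $\triangle u$ by the Leibniz rule, the terms involving $\chi\triangle e^{-t|x|^2/2}$ and $(t^2|x|^2-tn)\chi e^{-t|x|^2/2}$ cancel exactly (this is the key algebraic miracle, reflecting the fact that $e^{-t|x|^2/2}dx_J$ is a local zero-mode of $\triangle_t$ on $\mathbb{R}^n$). What remains is
$$\triangle_t v = \bigl[-\triangle\chi + 2t\,(x\cdot\nabla\chi)\bigr]e^{-t|x|^2/2}\,dx_J.$$
In particular $\triangle_t v$ is supported in the annular region $A := \{x:\exists i,\ \varepsilon\leq|x_i|\leq 2\varepsilon\}\cap\{x:\forall j,\ |x_j|\leq 2\varepsilon\}$, since $\nabla\chi$ and $\triangle\chi$ vanish whenever $\chi\equiv 1$ or $\chi\equiv 0$; moreover, throughout $A$ we have $|x|^2\geq\varepsilon^2$.

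Next I induct on $s$: assume $(\triangle_t^{(q)})^s v = \psi_s(x,t)\,e^{-t|x|^2/2}\,dx_J$ where $\psi_s$ is supported in $A$ and is a polynomial in $t$ of some degree $N_s$ whose coefficients involve $\chi$ and its derivatives, hence are uniformly bounded on $A$. The same computation as above, now applied to $u=\psi_s e^{-t|x|^2/2}$, yields
$$(\triangle_t^{(q)})^{s+1} v = \bigl[-\triangle\psi_s + 2t\,(x\cdot\nabla\psi_s)\bigr]e^{-t|x|^2/2}\,dx_J,$$
which is of the same form with $N_{s+1}\leq N_s+1$ and support still in $A$. Thus $(\triangle_t^{(q)})^s v$ is a polynomial-in-$t$ of bounded degree times a Gaussian, confined to $A$.

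To finish, I estimate the $L^2$ norm. Using the bound $|\psi_s(x,t)|\leq C_s(1+t)^{N_s}$ on $A$ and the fact that $|x|^2\geq\varepsilon^2$ there,
$$\bigl\|(\triangle_t^{(q)})^s v\bigr\|^2 \leq C_s^2(1+t)^{2N_s}\int_A e^{-t|x|^2}\,dx \leq C_s^2(1+t)^{2N_s}|A|\,e^{-t\varepsilon^2}.$$
Taking square roots and writing $e^{-t\varepsilon^2/2}=e^{-t\varepsilon^2/4}\cdot e^{-t\varepsilon^2/4}$, the polynomial factor is absorbed by one copy of $e^{-t\varepsilon^2/4}$ once $t\geq t_s$ is large enough, giving the claimed bound $\|(\triangle_t^{(q)})^s v\|\leq e^{-t\varepsilon^2/4}$. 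The main obstacle is really the bookkeeping in step three: one must verify that iterating $\triangle_t$ does not produce a Gaussian-eating term such as $t^2|x|^2\cdot e^{-t|x|^2/2}$ that would survive the cancellation, and this is exactly why the miraculous identity $-\triangle e^{-t|x|^2/2}+(t^2|x|^2-tn)e^{-t|x|^2/2}=0$ must be invoked at every step of the induction, not just the base case.
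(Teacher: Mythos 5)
Your proof is correct and follows essentially the same route as the paper: both exploit the fact that on the flat Morse chart the Gaussian $e^{-t|x|^2/2}\,dx_1\wedge\cdots\wedge dx_q$ is annihilated by $\triangle_t^{(q)}$, so that each application of $\triangle_t^{(q)}$ to $v$ leaves only terms supported in the annular region where $\chi$ is non-constant (hence where $|x|^2\geq\varepsilon^2$), after which the Gaussian integral over that region gives the exponential decay. The only cosmetic differences are that you estimate $\|(\triangle_t^{(q)})^s v\|$ directly rather than pairing $((\triangle_t^{(q)})^{2s}v,v)$ by self-adjointness as the paper does, and your inductive hypothesis (a general $\psi_s$ supported in the annulus, polynomial in $t$) is in fact stated more carefully than the paper's product-form ansatz.
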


\begin{proof}
  By calculation
  \begin{align}
    \triangle_t^{(q)}v=\triangle_t^{(q)} (\chi(x)\exp(-\frac{t|x|^2}{2}))
  =\exp(-\frac{t|x|^2}{2})\sum_{i=1}^n(2tx_i\frac{\partial\kappa_\varepsilon(x_i)}{\partial x_i}-\frac{\partial^2\kappa_\varepsilon(x_i)}{\partial x_i^2})\prod_{j\neq i}\kappa_\varepsilon(x_j).
  \end{align}
  Define 
  \[\tau_1(x_i)=2tx_i\frac{\partial\kappa_\varepsilon(x_i)}{\partial x_i}-\frac{\partial^2\kappa_\varepsilon(x_i)}{\partial x_i^2}\]
  Note that $\tau_1$ is non-negative the support of $\tau_1$ is contained in $[-2\varepsilon,-\varepsilon]\cup[\varepsilon,2\varepsilon]$. By an induction argument, one can show that 
  \begin{align}\label{E:4.3}
  (\triangle_t^{(q)})^kv=(\triangle_t^{(q)})^k (\chi(x)\exp(-\frac{t|x|^2}{2}))=\exp(-\frac{t|x|^2}{2})\sum_{i=1}^n\tau_k(x_i)\prod_{j\neq i}\kappa_\varepsilon(x_j)
  \end{align}
  where $\text{supp}\tau_k\subseteq [-2\varepsilon,-\varepsilon]\cup[\varepsilon,2\varepsilon]$.
  Now, by self-adjointness of $\triangle^{(q)}_t$ and formula (\ref{E:4.3}),
  \begin{align*}
    \|(\triangle^{(q)}_t)^sv\|^2&=((\triangle^{(q)}_t)^sv,(\triangle^{(q)}_t)^sv)=((\triangle^{(q)}_t)^{2s}v,v) \\
    &=n\int_\mathbb{R}\exp(-ty^2)\tau_{2s}(y)\kappa_\varepsilon(y)dy\left(\int_\mathbb{R}\exp(-ty^2)\kappa_\varepsilon(y)^2dy\right)^{n-1} \\
    &\leq 2n\sup_{y\in\mathbb{R}} \tau_{2s}(y)\int_\varepsilon^\infty \exp(-ty^2)(y)dy \left(\int_\mathbb{R}\exp(-ty^2)dy\right)^{n-1} \\
    &\leq 2n\sup_{y\in\mathbb{R}}\tau_{2s}(y)\frac{1}{t\varepsilon}\exp(-\frac{t\varepsilon^2}{2})\left(\frac{\pi}{t}\right)^{(n-1)/2}.
  \end{align*}
  As a result, we have 
  \[\|(\triangle_t^{(q)})^sv\|\leq \exp(-\frac{t\varepsilon^2}{4})\]
  when $t\geq t_2$ where $t_{s}=(2n\pi^{(n-1)/2}\sup_{y\in\mathbb{R}}\tau_{2s}(y))^{2/(n+1)}$ depending on $s$.
\end{proof}

There is another inequalities which is needed. 

\begin{lem}[Sobolev Type Inequality]\label{lem:Sobolev}
Let $h\in \Omega^q(M)$ be a smooth $q$-form on $M$. Then we have the following estimation
\begin{align}
  \|h\|_{\infty}\leq C\|h\|_{2n}
\end{align}
for some $C>0$.
\end{lem}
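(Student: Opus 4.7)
The plan is to reduce the inequality to the classical Sobolev embedding on $\mathbb{R}^n$, exploiting the fact that both $\|\cdot\|_\infty$ and $\|\cdot\|_{2n}$ on $\Omega^q(M)$ are, by their alternative definitions given just above the statement, built as finite sums of scalar norms of the coefficient functions of $\eta_{i*}(\rho_i h)$ in the local trivialization of $\Lambda^q T^*M$. Hence a componentwise estimate is all that is really needed; the geometry of the manifold enters only through the fixed finite cover and partition of unity.

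Concretely, I would first fix a chart index $i$ and a multi-index $i_1<\cdots<i_q$ and consider the scalar coefficient $h_{i,i_1,\ldots,i_q}$ of $\eta_{i*}(\rho_i h)$. Because $\rho_i$ is compactly supported in $U_i$ and $h$ is smooth, this coefficient is a smooth compactly supported function on $\varphi_i(U_i)\subseteq\mathbb{R}^n$; I would extend it by zero to all of $\mathbb{R}^n$. With this extension the norm $\|h_{i,i_1,\ldots,i_q}\|_{2n,\mathbb{R}}$ is, up to normalization, exactly the $H^{2n}(\mathbb{R}^n)$ Sobolev norm.

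Next I would invoke the classical Sobolev embedding on $\mathbb{R}^n$, namely $H^s(\mathbb{R}^n)\hookrightarrow C^0_b(\mathbb{R}^n)$ whenever $s>n/2$. Since $2n>n/2$ for every $n\geq 1$, there is a universal constant $C_0=C_0(n)$ such that
$$\|h_{i,i_1,\ldots,i_q}\|_{\infty,\mathbb{R}}\leq C_0\,\|h_{i,i_1,\ldots,i_q}\|_{2n,\mathbb{R}}$$
for every such coefficient. Summing over the finite family of charts and of multi-indices then yields the inequality $\|h\|_\infty\leq C_0\,\|h\|_{2n}$, and we may take $C=C_0$. If one insists on interpreting the left-hand side as the intrinsic supremum $\sup_{p\in M}|h(p)|$ induced by the Riemannian metric rather than as the coordinate sum, the two are equivalent on the compact manifold $M$ (the transition from pointwise $g$-norm to coordinate components involves smooth bounded matrices on a compact set), so the constant $C$ absorbs this comparison without difficulty.

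The only real content is the Sobolev embedding on $\mathbb{R}^n$, and that is a completely standard result which I would cite rather than reprove; the usual Fourier-analytic argument bounds $\|\widehat{h}\|_{L^1}$ by $\|(1+|\xi|^2)^{s/2}\widehat{h}\|_{L^2}$ via Cauchy--Schwarz when $s>n/2$, and then $\|h\|_\infty\leq(2\pi)^{-n/2}\|\widehat{h}\|_{L^1}$. The main obstacle, if any, is purely bookkeeping: verifying that the definitions of the two norms assemble correctly in the partition of unity so that the summed componentwise estimate gives exactly the stated inequality.
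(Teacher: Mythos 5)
Your proposal is correct and follows essentially the same route as the paper: localize with the partition of unity and the trivializations, apply the Fourier-analytic Sobolev embedding $H^{2n}(\mathbb{R}^n)\hookrightarrow C^0_b(\mathbb{R}^n)$ to each coefficient function, and sum over the finite family of charts and multi-indices. The only cosmetic difference is that you cite the classical embedding while the paper reproves it via the same Cauchy--Schwarz/Parseval computation you sketch, so there is nothing substantive to add.
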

\begin{proof}
  The case on $\mathbb{R}^n$: Let $h \in \mathscr{S}(\mathbb{R}^n)$ (Schwartz class on $\mathbb{R}^n$). For all $x \in \mathbb{R}^n$,
\begin{align}
     | h (x) | & = \left| (2 \pi)^{- n} \int_{\mathbb{R}^n} e^{i \langle x,
     \xi \rangle} \hat{h} (\xi) d \xi \right| \leq (2 \pi)^{- n} \int |
     \hat{h} (\xi) | d \xi\\
     & =  (2 \pi)^{- n} \int_{\mathbb{R}^n} | \hat{h} (\xi) | (1 + | \xi
     |^2)^{n} (1 + | \xi |^2)^{-n} d \xi\\
     & \leq (2 \pi)^{- n} \left( \int_{\mathbb{R}^n} (1 + | \xi
     |^2)^{2 n} | \hat{h} (\xi) |^2 d \xi \right)^{1 / 2} \left(
     \int_{\mathbb{R}^n} (1 + | \xi |^2)^{- 2 n} d \xi \right)^{1 / 2} \label{E:4.7}
\end{align}
The second line is a common trick to derive upper bound envolving $L^2$ norm,
and the last inequalities comes from Cauchy-Schwartz.

Note that $\int_{\mathbb{R}^n} (1 + | \xi |^2)^{- 2 n} d \xi$ is bounded. Thus, it remains to estimate the integration of $(1+|\xi|)^{2n}\hat{h}(\xi)$:
\begin{align}
 \int_{\mathbb{R}^n}(1 + | \xi |^2)^{2 n} | \hat{h} (\xi) |^2 = \int_{\mathbb{R}^n}\sum_{\alpha_1+\cdots\alpha_n\leq 2n} \xi_1^{2\alpha_1}\cdots\xi_n^{2\alpha_n}\hat{h}(\xi)^2 d\xi \\
 =  \sum_{\alpha_1+\cdots\alpha_n\leq 2n}\int_{\mathbb{R}^n} \xi_1^{2\alpha_1}\cdots\xi_n^{2\alpha_n}\hat{h}(\xi)^2 d\xi \label{E:4.9}
\end{align}
Note that the Fourier transform of $\xi_1^{\alpha_1}\cdots\xi_n^{\alpha_n}\hat{h}(\xi)$ is 
\begin{align}
(-i)^{\alpha_1+\cdots+\alpha_n}\frac{\partial^{\alpha_1}}{\partial x_1^{\alpha_1}}\cdots\frac{\partial^{\alpha_n}}{\partial x_n^{\alpha_n}}h(x). \label{E:4.10}
\end{align}
Applying (\ref{E:4.9}), (\ref{E:4.10}) and Parseval's formula (cf. \cite[Theorem 7.1.6]{Hormander}), we then have
\begin{align}
\int_{\mathbb{R}^n}(1 + | \xi |^2)^{2 n} | \hat{h} (\xi) |^2 = (2\pi)^n\sum_{\alpha_1+\cdots+\alpha_n\leq 2n} \int_{\mathbb{R}^n} (\frac{\partial^{\alpha_1}}{\partial x_1^{\alpha_1}}\cdots\frac{\partial^{\alpha_n}}{\partial x_n^{\alpha_n}}h(x) )^2 dx \\
= (2\pi)^n\sum_{|\alpha|\leq 2n} \|\partial^\alpha h\|^2_{L^2(\mathbb{R}^n)}=(2\pi)^n\|h\|_{2n}^2, \label{E:L^2-Bound}
\end{align}
where $|\alpha|=\alpha_1+\cdots+\alpha_n$ and $\partial^{\alpha}=\frac{\partial^{\alpha_1}}{\partial x_1^{\alpha_1}}\cdots\frac{\partial^{\alpha_n}}{\partial x_n^{\alpha_n}}$.

Therefore, from (\ref{E:4.7}) and (\ref{E:L^2-Bound}), we derive that for all $x \in \mathbb{R}^n$
\begin{align} | h (x) | \leq C\|h\|_{2n}, 
\end{align}
where $C=(2\pi)^{-n/2}(\int_{\mathbb{R}^n}(1+|\xi|^2)^{-2n}d\xi)^{1/2}$, independent on $x$. So by taking supremum over $x \in
\mathbb{R}^n$ we prove that
\begin{align}
  \| h \|_{\infty} = \sup_{x \in \mathbb{R}^n} | h (x) | \leq C \| h
   \|_{2n}.
\end{align}

The case on closed manifold: let $\{(U_i,\varphi_i,x)\}_{i=1}^N$ be a collection of charts covers $M$. By passing to a finer cover, we may assume $\Omega^q(M)$ trivializes on each $U_i$ with the corresponding trivialization  (composited with $\varphi_i\times id$)
\[f_i:E|_{U_i}\rightarrow \varphi^{-1}(U_i)\times \mathbb{R}^{\text{rank}\Omega^q(M)}\]
By the previous case of the estimation on $\mathbb{R}^n$,
\begin{align*}
  \|h\|_{2n}'=\sum_{i=1}^N \|f_{i*}(\eta_ih)\|_{2n,\mathbb{R}^n}\leq\sum_{i=1}^N C_i\|f_{i*}\eta_ih\|_{\infty,\mathbb{R}^n} \leq \max_{1\leq i\leq N}\{C_i\}\sum_{i=1}^N \|f_{i*}\eta_ih\|_{\infty,\mathbb{R}^n} \\
  = \max_{1\leq i\leq N}\{C_i\}\|h\|'_{\infty}
\end{align*}
and this completes the proof. 
\end{proof}

Now fix $0 < C < \varepsilon^2/4$. Denote $L_q^2 (M)$ be the completion of $\Omega^q (M)$ with respect to
$(\cdot, \cdot)$. Let
\[ P_{\leq e^{- C t}}^q : L_q^2 (M) \rightarrow E_{[0, e^{- C t}], t}^q
   (M) \]
be the orthogonal projection with respect to $(\cdot, \cdot)$. Denote
$\tilde{v} = P_{\leq e^{- C t}}^q v$ be the orthogonal projection of $v$
onto $E_{[0, e^{- C t}], t}^q (M)$. We give an upper bound of the sup norm of
\[ 
\| v - \tilde{v} \|_{\infty} = \sup_{x \in M} | v (x) - \tilde{v} (x) |
\]
in the following theorem:

\begin{thm}\label{thm:4.3}
  We can find $\varepsilon_0 > 0$ and $t_0 > 0$ such that when $t \geqslant
  t_0$, we have
  \[ \| v - \tilde{v} \|_{\infty} \leq e^{- \varepsilon_0 t} . \]
\end{thm}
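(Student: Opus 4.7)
The plan is to first obtain exponential $L^2$ decay of $v - \tilde{v}$ from the spectral decomposition of $\triangle_t^{(q)}$, and then upgrade to a sup-norm estimate via elliptic regularity combined with the Sobolev-type inequality of Lemma \ref{lem:Sobolev}.

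Let $\{f_j\}$ be the complete orthonormal eigenbasis of $\triangle_t^{(q)}$ with eigenvalues $\lambda_j$, and write $v = \sum_j c_j f_j$. By definition $\tilde{v} = \sum_{\lambda_j \leq e^{-Ct}} c_j f_j$, so $v - \tilde{v} = \sum_{\lambda_j > e^{-Ct}} c_j f_j$. Applying Lemma \ref{lem:LaplacianBound} with $s = 1$ yields $\|\triangle_t^{(q)} v\|^2 \leq e^{-t\varepsilon^2/2}$, while
\[ \|\triangle_t^{(q)}(v - \tilde{v})\|^2 \;=\; \sum_{\lambda_j > e^{-Ct}} \lambda_j^2 |c_j|^2 \;\geq\; e^{-2Ct}\,\|v - \tilde{v}\|^2. \]
Combining these gives $\|v - \tilde{v}\|^2 \leq e^{t(2C - \varepsilon^2/2)}$, which decays exponentially since $C < \varepsilon^2/4$. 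The same argument with higher powers of $\triangle_t^{(q)}$ shows $\|(\triangle_t^{(q)})^s(v - \tilde{v})\| \leq \|(\triangle_t^{(q)})^s v\| \leq e^{-t\varepsilon^2/4}$ for each fixed $s \geq 1$, once $t$ exceeds the corresponding threshold $t_s$ from Lemma \ref{lem:LaplacianBound}.

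To pass from $L^2$ to sup norm I apply Lemma \ref{lem:Sobolev}: $\|v - \tilde{v}\|_\infty \leq C_0 \|v - \tilde{v}\|_{2n}$. The Sobolev $2n$-norm is controlled by elliptic regularity for the Hodge Laplacian $\triangle^{(q)}$, which by the Bochner formula has the same principal symbol as $\triangle_t^{(q)}$: indeed $\triangle_t^{(q)} - \triangle^{(q)} = tA + t^2 B$ where $A$ and $B$ are zeroth-order bundle endomorphisms with smooth coefficients on $M$ independent of $t$. Iterating the standard elliptic estimate $\|w\|_{2k} \leq C_k(\|\triangle^{(q)} w\|_{2(k-1)} + \|w\|_{2(k-1)})$ and substituting $\triangle^{(q)} = \triangle_t^{(q)} - tA - t^2 B$, a routine induction on $k$ produces
\[ \|w\|_{2n} \;\leq\; P_n(t) \sum_{s=0}^n \|(\triangle_t^{(q)})^s w\| \]
for some polynomial $P_n$ in $t$. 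Plugging in $w = v - \tilde{v}$ and invoking the exponential bounds above gives $\|v - \tilde{v}\|_{2n} \leq P_n(t)\cdot e^{-t(\varepsilon^2/4 - C)}$; since $\varepsilon^2/4 - C > 0$, the polynomial factor is absorbed by the exponential, and I conclude $\|v - \tilde{v}\|_\infty \leq e^{-\varepsilon_0 t}$ for any fixed $0 < \varepsilon_0 < \varepsilon^2/4 - C$ and all sufficiently large $t$.

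The main technical obstacle is tracking the $t$-dependence in the elliptic regularity estimate carefully enough to guarantee only polynomial growth in $t$. Because $A$ and $B$ do not commute with $\triangle_t^{(q)}$, substituting $\triangle^{(q)} = \triangle_t^{(q)} - tA - t^2 B$ into powers of $\triangle^{(q)}$ produces many cross terms, but each is a composition of at most $n$ operators in which the non-$\triangle_t^{(q)}$ factors are bounded zeroth-order multipliers with coefficients independent of $t$; the accompanying $t$-powers accumulate only to at most $t^{2n}$, which is polynomial and therefore dominated by the exponential gain supplied by Lemma \ref{lem:LaplacianBound}.
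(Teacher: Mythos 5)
Your proposal is correct and follows essentially the same route as the paper: exponential $L^2$ bounds on $\|(\triangle_t^{(q)})^s(v-\tilde v)\|$ from Lemma \ref{lem:LaplacianBound} together with the spectral gap $e^{-Ct}$, an elliptic (G{\aa}rding-type) estimate with $t$-polynomial constants to control $\|v-\tilde v\|_{2n}$, and Lemma \ref{lem:Sobolev} to pass to the sup norm. The only difference is cosmetic: you derive the polynomial-in-$t$ dependence of the elliptic constant by induction, whereas the paper simply cites a G{\aa}rding inequality with a factor $\hat{C}t$; your bookkeeping is in fact the more careful of the two, and either factor is absorbed by the exponential decay since $C<\varepsilon^2/4$.
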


\begin{proof}
  Applying the G{\aa}rding inequalities \cite[theorem 8.1]{Taylor}, we get
\begin{align}\label{E:2nBound}
\|v-\tilde{v}\|_{2n}\leq \hat{C}t ((\triangle_t^{(q)})^{2n} (v-\tilde{v}), v-\tilde{v})+\hat{C}\|v-\tilde{v}\|=\hat{C}t\|(\triangle_t^{(q)})^n(v-\tilde{v})\|+\hat{C}\|v-\tilde{v}\|
\end{align}
Note that if we write $v=\sum_{i=0}^\infty u_i$, $u_i \in E_{\lambda_i,t}(M)$, then
\begin{align}
\|(\triangle_t^{(q)})^n(v-\tilde{v})\|=\sum_{i=N+1}^\infty \lambda^n_i \|u_i\|\leq \sum_{i=0}^\infty \lambda^n_i \|u_i\|=\|(\triangle_t^{(q)})^nv\|\leq \exp(-\frac{t\varepsilon^2}{4}),
\end{align}
for $t\geq t_n$ by lemma \ref{lem:LaplacianBound}. The number $N$ is the largest integer so that $\lambda_i\leq e^{-Ct}$ for all $i=0,\ldots, N$. 

On the other hand, also by lemma \ref{lem:LaplacianBound}, when $t\geq t_1$
\begin{align}
 \exp(-\frac{t\varepsilon^2}{4})\geq\|(\triangle_t^{(q)})v\|\geq\sum_{i=N+1}^\infty \lambda_i \|u_i\|\geq e^{-Ct} \sum_{i=N+1}^\infty \|u_i\|=e^{-Ct}\|v-\tilde{v}.\| 
\end{align}

Then we have
\begin{align}
\|v-\tilde{v}\|_{2n}\leq\hat{C}t\exp(-\frac{t\varepsilon^2}{4})+\hat{C}\exp(-(\frac{\varepsilon^2}{4}-C)t)
\end{align}
for $t\geq\max\{t_1,t_n\}$. Take $\varepsilon_0=\min\{\varepsilon^2/4,\varepsilon^2/4-C\}/2$. Then there exists $t_0\geq\max\{t_1,t_n\}$ so that when $t\geq t_0$, 
\[\|v-\tilde{v}\|_{2n}\leq \exp(-\varepsilon_0 t).\]
And by lemma \ref{lem:Sobolev},
\[\|v-\tilde{v}\|_\infty\leq \|v-\tilde{v}\|_{2n}\leq \exp(-\varepsilon_0 t)\]
when $t\geq t_0$.
\end{proof}
  Write $\text{Crit}(f;q)=\{p_1,\ldots,p_{m_q}\}$. By the theorem above we know that there exists a sequence of sections $\beta=\{\tilde{v}_1,\ldots,\tilde{v}_{m_q}\}$ so that $\tilde{v}_j\in E_{[0,e^{-Ct}],t}^q(M)$ and
\begin{align}
  \tilde{v}_j=v_j+R_j
\end{align}
where $v_j$ satisfies formula (\ref{E:localexponential}) and $\|R_j\|_\infty\leq e^{-\varepsilon_0t}$. 

To end this subsection, We claim that $\beta$ is linearly independent when $t$ is sufficiently large. We define $A(t),\tilde{A(t)}\in M_{m_q\times m_q}(\mathbb{R})$ by
\[
  A(t)_{ij}=(v_i,v_j)
\]
and
\[
  \tilde{A}(t)_{ij}=(\tilde{v_i},\tilde{v_j}).
\] 
By our construction, $A(t)$ is always positive definite diagonal matrix for any $t>0$. We check it in the following: for $i\neq j$, the supports of $v_i$ and $v_j$ are disjoint, and thus $(v_i,v_j)=0$. On the other hand, when $i=j$, $(v_i,v_i)>0$ since $v_i$ is non-trivial.

In particular, $\det A(t)>0$ for any $t>0$. Since 
\[\det:M_{m_q\times m_q}(\mathbb{R})\rightarrow \mathbb{R}\] 
is a real-valued continuous function, there exists $\delta>0$ so that $\det B>\det A(t)/2$ for any $B\in M_{m_q\times m_q}(\mathbb{R})$ with $\sup_{ij}|A(t)_{ij}-B_{ij}|<\delta$. But by Theorem \ref{thm:4.3},
\[|A(t)_{ij}-\tilde{A}_{ij}|\leq (\|R_i\|_{\infty}^2(v_j,v_j)+\|R_j\|_{\infty}^2(v_i,v_i)+\|R_i\|_{\infty}\|R_j\|_{\infty})\int_Mdv_g<\delta\]
when $t$ is sufficiently large. Thus, when $t$ is sufficiently large
\[(a_1\tilde{v}_1+\cdots+a_{m_q}\tilde{v}_{m_q},a_1\tilde{v}_1+\cdots+a_{m_q}\tilde{v}_{m_q})=\sum_{i,j=1}^{m_q}a_ia_j(\tilde{v}_i,\tilde{v}_j)\]
is identically zero if and only $a_1=\cdots=a_{m_q}=0$. Thus, there exists at least $m_q$ linear independent elements in $E^q_{[0,e^{-Ct}],t}(M)$, in other words,
\[\dim E^q_{[0,e^{-Ct}],t}(M)\geq m_q.\]

\subsection{Upper bound of $\dim E^q_{[0,e^{-Ct}],t}(M)$}

Conversely, we need to show $\dim E^q_{[0,e^{-Ct},t]}(M)\leq m_q$. To do this, it suffices to show $\beta$ we chose in the last section spans all elements in $E^q_{[0,e^{-Ct},t]}(M)$. We claim that 
\begin{thm}\label{thm:KeyEstimation}
  There exists $C>0$ and $s>0$ so that when $t\geq s$, for all $u\in \Omega^q(M)$, $u\in (\text{span}\beta)^{\perp}$, we have
  \[(\triangle_t^{(q)}u,u)\geq Ct\|u\|^2.\]
\end{thm}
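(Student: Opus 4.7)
The plan is to combine an IMS-type localization with the harmonic-oscillator spectral information from Section 5. First I would choose a quadratic partition of unity: fix a cut-off $\phi_p\in C^\infty_0(U_p)$ at each $p\in\mathrm{Crit}(f)$ (with $\phi_p\equiv 1$ on a smaller neighborhood containing $\mathrm{supp}\,\chi$ from \eqref{E:localexponential}) and a global $\phi_0$ supported in $M\setminus\bigcup_p\{\phi_p=1\}$, normalized so $\phi_0^2+\sum_p\phi_p^2=1$. The standard IMS identity yields
\begin{equation*}
(\triangle_t u,u)=(\triangle_t(\phi_0 u),\phi_0 u)+\sum_{p}(\triangle_t(\phi_p u),\phi_p u)-\Bigl(\bigl(|d\phi_0|^2+\sum_p|d\phi_p|^2\bigr)u,u\Bigr),
\end{equation*}
and the error term is $O(\|u\|^2)$ uniformly in $t$ since the $\phi$'s are fixed.

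Next I would treat the two regimes separately. On $\mathrm{supp}\,\phi_0$ we have $|df|^2\geq c_0>0$, so the Bochner formula \eqref{E:Bochner} gives $(\triangle_t(\phi_0 u),\phi_0 u)\geq(t^2 c_0-C_0)\|\phi_0 u\|^2$, which dominates $t\|\phi_0 u\|^2$ for large $t$. Near each critical point $p$ of index $r$, by our choice of metric in subsection 3.3 the operator $\triangle_t$ acts on $\phi_p u$ literally as the model Witten harmonic oscillator
\begin{equation*}
H_r=-\triangle+t^2|x|^2+t\sum_{j=1}^n\varepsilon_j[dx_j\wedge,dx_j\lrcorner]
\end{equation*}
on $\mathbb{R}^n$-valued $q$-forms. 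The eigenvalue computation of Section 5 shows that the spectrum of $H_r$ on $q$-forms is contained in $\{t(n+2k+\sigma):k\in\mathbb{Z}_{\geq 0},\;\sigma\text{ from the sign terms}\}$: when $r\neq q$, the smallest eigenvalue is $\geq 2t$, and when $r=q$ the kernel is one-dimensional, spanned by the Gaussian ground state $e^{-t|x|^2/2}dx^1\wedge\cdots\wedge dx^q$, with a spectral gap $\geq 2t$ above it.

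For indices $r\neq q$ the local estimate $(\triangle_t(\phi_p u),\phi_p u)\geq 2t\|\phi_p u\|^2$ is immediate. For $r=q$, I would use the hypothesis $u\perp\mathrm{span}\,\beta$: since $\tilde v_p=v_p+R_p$ with $\|R_p\|_\infty\leq e^{-\varepsilon_0 t}$ and $v_p$ equals the ground state of $H_q$ (times $\chi$) on $\mathrm{supp}\,\phi_p$, the orthogonality $(u,\tilde v_p)=0$ implies that the projection of $\phi_p u$ onto the one-dimensional kernel of $H_q$ has $L^2$-norm of order $e^{-\varepsilon_0 t}\|u\|$. Decomposing $\phi_p u$ into this low-mode plus its orthogonal complement and applying the $2t$ spectral gap on the complement gives
\begin{equation*}
(\triangle_t(\phi_p u),\phi_p u)\geq 2t\|\phi_p u\|^2-O(te^{-\varepsilon_0 t})\|u\|^2.
\end{equation*}
Summing over all $p$, combining with the $\phi_0$ bound, and absorbing the $O(\|u\|^2)+O(te^{-\varepsilon_0 t}\|u\|^2)$ errors into $Ct\|u\|^2$ (with, say, $C=1$) for $t$ large, yields the claim.

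The main obstacle is the $r=q$ case: the orthogonality in the hypothesis is to the approximate ground states $\tilde v_p$, whereas the harmonic-oscillator spectral gap only controls the part of $\phi_p u$ orthogonal to the \emph{exact} ground state of the model $H_q$ on $\mathbb{R}^n$. Reconciling these—bounding the projection of $\phi_p u$ onto $\ker H_q$ by the orthogonality to $\tilde v_p$ together with the exponential estimate $\|R_p\|_\infty\leq e^{-\varepsilon_0 t}$, and verifying that the localization commutator $[\phi_p,\triangle_t]$ does not spoil the gap—is the delicate part, but the exponential decay of $R_p$ leaves enough room against the polynomial factor $t$.
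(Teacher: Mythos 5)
Your proposal is correct and follows essentially the same route as the paper: an IMS-type quadratic partition of unity (the paper's Lemma 4.5), the $t^2|df|^2$ lower bound away from critical points (Lemma 4.6), the spectral gap of the model oscillator $\Box_t$ at critical points of index $r\neq q$ (Lemma 4.7), and at index-$q$ points the control of the projection onto the one-dimensional kernel via orthogonality to $\tilde v_p$ together with the exponential smallness of $R_p$ and of the $(1-\chi)$-tail of the Gaussian (Lemma 4.8). The "delicate part'' you flag at the end is exactly how the paper closes the argument.
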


Let $S$ be the open covering appeared in (\ref{E:OpenCovering}). Let $\{\varphi_U\}$ be a partition of unity subordinated on $S$ so that 
\[\sum_{U\in S} \varphi_U^2=1\]
and
\[\varphi_{U_p}=1 \text{ on } \text{supp}\chi\]
for all $p\in \text{Crit}(f)$. 

\begin{lem}\label{lem:1}
 There exists a constant $C>0$, independent of $t$, so that for all $t\geq 0$ and for all $u\in\Omega^q(M)$, 
 \[(\triangle^{(q)}_t u, u)\geq \sum_{U\in S}(\triangle_t^{(q)}(\varphi_U u),\varphi_U u)-C \|u\|^2.\]
\end{lem}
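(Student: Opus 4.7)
The plan is to prove this by the standard IMS (Ismagilov–Morgan–Simon) localization formula. The key algebraic identity is that for any partition of unity $\{\varphi_U\}$ satisfying $\sum_U \varphi_U^2 = 1$, and any self-adjoint operator $A$, one has
\[
    \sum_U \varphi_U A \varphi_U \;=\; A \;-\; \frac{1}{2}\sum_U \bigl[\varphi_U,[\varphi_U, A]\bigr].
\]
I would derive this first by expanding $[\varphi_U,[\varphi_U,A]] = \varphi_U^2 A - 2\varphi_U A \varphi_U + A\varphi_U^2$ and summing, using $\sum_U \varphi_U^2 = 1$ to collapse the outer terms to $2A$.

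Next I would apply this with $A = \triangle_t^{(q)}$ and observe, using the Bochner-type formula (\ref{E:Bochner}), that
\[
\triangle_t^{(q)} \;=\; \triangle^{(q)} \;+\; t^2|df|^2 \;+\; t\sum_{k,l}\mathrm{Hess}_f\Bigl(\tfrac{\partial}{\partial x_l},\tfrac{\partial}{\partial x_k}\Bigr)[dx_l\wedge, dx_k\lrcorner].
\]
All of the $t$-dependent terms act as multiplication operators on forms, so they commute with multiplication by $\varphi_U$. Hence the double commutator collapses:
\[
[\varphi_U,[\varphi_U,\triangle_t^{(q)}]] \;=\; [\varphi_U,[\varphi_U,\triangle^{(q)}]],
\]
which is \emph{independent of $t$}. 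A short local computation (writing $\triangle^{(q)} = \nabla^*\nabla + \text{curvature}$, or simply using that $[\triangle^{(q)},\varphi_U]$ is a first-order operator whose principal symbol involves $d\varphi_U$) shows that this double commutator is in fact a bounded zeroth-order (multiplication-type) operator whose pointwise norm is controlled by $|d\varphi_U|^2$. Since $M$ is compact and there are finitely many $U \in S$, we get a constant $C > 0$, independent of $t$, such that
\[
\Bigl| \tfrac{1}{2}\sum_U \bigl([\varphi_U,[\varphi_U,\triangle^{(q)}]]u,u\bigr) \Bigr| \;\leq\; C \|u\|^2
\qquad \text{for all } u \in \Omega^q(M).
\]

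Finally, pairing the IMS identity with $u$ and using self-adjointness of $\triangle_t^{(q)}$ to move one copy of $\varphi_U$ across the inner product,
\[
(\triangle_t^{(q)} u, u) \;=\; \sum_{U\in S} (\triangle_t^{(q)}(\varphi_U u), \varphi_U u) \;+\; \tfrac{1}{2}\sum_U \bigl([\varphi_U,[\varphi_U,\triangle^{(q)}]]u,u\bigr),
\]
and bounding the second sum by $-C\|u\|^2$ from below yields the desired inequality.

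The only delicate point is verifying that the double commutator, computed on forms rather than functions, is genuinely a bounded multiplication-type operator with $t$-independent bound; this is where I would be most careful, but it reduces to the Leibniz rule applied twice to $\triangle^{(q)}$ (which is a second-order operator with smooth coefficients on a compact manifold) and to noting that commutators of $\varphi_U$ with the zeroth-order curvature/Weitzenböck and with the Witten potential terms all vanish or are controlled by $|d\varphi_U|$ and $|\nabla d\varphi_U|$, which are uniformly bounded since $\varphi_U \in C^\infty(M)$ and $M$ is compact.
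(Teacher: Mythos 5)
Your proof is correct and is essentially the paper's argument in abstract form: the IMS identity, together with the observation that the Witten potential terms are multiplication operators commuting with $\varphi_U$, is exactly what the paper verifies concretely by expanding $|d_t(\varphi_U u)|^2+|d_t^*(\varphi_U u)|^2$ and using $\sum_U\varphi_U\,d\varphi_U=0$ to cancel the cross terms. The paper's $t$-independent error term $\sum_U\bigl(|d\varphi_U\wedge u|^2+|d\varphi_U\lrcorner u|^2\bigr)=\sum_U|d\varphi_U|^2|u|^2$ is precisely your double commutator $-\tfrac{1}{2}\sum_U[\varphi_U,[\varphi_U,\triangle_t^{(q)}]]$.
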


\begin{proof}
Applying formula (\ref{E:Bochner}), we have 
\[(\triangle_t^{(q)}u,u)=\int_M\left(|du|^2+|d^*u|^2+t^2|df|^2|u|^2+t(\sum_{k,l}\text{Hess}_f(\frac{\partial}{\partial x_l},\frac{\partial}{\partial x_k})[dx_l\wedge,dx_k\lrcorner]u,u)\right)\]
Since $\sum\varphi_Ud\varphi_U=0$ and 
\begin{align*}
  \sum|d(\varphi_Uu)|^2&=|du^2|+\sum|d\varphi_U\wedge u|^2 \\
  \sum|d^*(\varphi_Uu)|^2&=|d^*u^2|+\sum|d\varphi_U\lrcorner u|^2 
\end{align*}
we obtain for any $u\in \Omega^q(M)$, 
\[\sum_{U\in S}(\triangle_t^{(q)}(\varphi_U u),\varphi_U u)=\sum\int_M |d\varphi_U \wedge u|^2+|d\varphi_U\lrcorner u|^2 dv_M+(\triangle_t^{(q)} u,u)\leq C\|u\|^2+(\triangle_t^{(q)} u,u).\]
\end{proof}

\begin{lem}
  There exists $C>0$ and $s>0$ such that for all $u\in\Omega^q_0(V)$,
  \begin{align}
    (\triangle_t^{(q)} u,u)\geq C t\|u\|^2,
  \end{align}
  when $t\geq s$.
\end{lem}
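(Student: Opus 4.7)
The plan is to apply the Bochner-type formula \eqref{E:Bochner} directly and exploit the defining property of $V$: it is bounded away from the critical set $\text{Crit}(f)$. Testing both sides of \eqref{E:Bochner} against $u$ and integrating (noting that the principal-symbol part $\triangle$ contributes $\|du\|^2 + \|d^*u\|^2 \geq 0$), I get
\[
(\triangle_t^{(q)} u, u) = \|du\|^2 + \|d^*u\|^2 + t^2 \int_M |df|^2 |u|^2\,dv_g + t\int_M \langle K_f u, u\rangle\,dv_g,
\]
where $K_f := \sum_{k,l}\mathrm{Hess}_f(\tfrac{\partial}{\partial x_l},\tfrac{\partial}{\partial x_k})[dx_l\wedge, dx_k\lrcorner]$ is a smooth self-adjoint bundle endomorphism of $\Lambda^q T^*M$. (The coefficient of $|df|^2$ is $t^2$, as the proof of the Bochner formula shows; the $t$ written in the statement is a typo.)

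Because $u$ has support in $V = M\setminus\bigcup_{p\in\mathrm{Crit}(f)} U_p$, which is closed in the compact manifold $M$ and contains no critical points, the continuous positive function $|df|^2$ attains a strictly positive minimum $c_0 > 0$ on $\mathrm{supp}(u) \subseteq V$. Hence
\[
t^2\int_M |df|^2 |u|^2\,dv_g \geq t^2 c_0 \|u\|^2.
\]
Meanwhile, $K_f$ is a smooth endomorphism on the compact manifold $M$, so its pointwise operator norm is uniformly bounded by some constant $C_1 > 0$ depending only on $f$ and $g$; consequently $\bigl|\int_M \langle K_f u, u\rangle\,dv_g\bigr| \leq C_1 \|u\|^2$.

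Combining the two estimates (and discarding the nonnegative terms $\|du\|^2+\|d^*u\|^2$),
\[
(\triangle_t^{(q)} u, u) \geq t^2 c_0 \|u\|^2 - t C_1 \|u\|^2 = t(t c_0 - C_1)\|u\|^2.
\]
Setting $s := 2C_1/c_0$ ensures $tc_0 - C_1 \geq C_1$ whenever $t \geq s$, so the desired inequality holds with $C := C_1$. The entire argument is short; the only genuinely nontrivial input is the existence of a positive lower bound $c_0$ for $|df|^2$ on $V$, which is exactly the reason for choosing $V$ to avoid the critical set (so that the quadratic-in-$t$ confining term dominates the linear-in-$t$ Hessian term for $t$ large). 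No obstacle beyond bookkeeping is expected.
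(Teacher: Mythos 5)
Your proof is correct and follows essentially the same route as the paper: both apply the Bochner-type formula, use that $|df|^2$ is bounded below by a positive constant on $V$ (which avoids the critical set), and absorb the linear-in-$t$ Hessian term into the quadratic-in-$t$ confining term for $t$ large. You are also right that the $t|df|^2$ in the displayed Bochner formula is a typo for $t^2|df|^2$; the paper's own proof of this lemma uses the $t^2$ coefficient, just as you do.
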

\begin{proof}
  From the Bochner type formula, we know that 
  \[(\triangle_t u,u)=(\triangle u,u)+ t^2|df|^2\|u\|^2+t(\sum_{l,k}\text{Hess}_f(\frac{\partial}{\partial x_l},\frac{\partial}{\partial x_k})[dx^l\wedge,dx^k\lrcorner]u,u).\]
  On $V$, $|df|^2>c$ for some $c>0$, so there exists $t_0>0$ such that 
  \begin{align}
    (\triangle_t u,u)\geq \frac{c}{2} t^2 \|u\|^2 \geq \frac{ct_0}{2}t\|u\|^2
  \end{align}
  when $t\geq t_0$. Take $C=ct_0/2$ and $s=t_0$ and we complete the proof.
\end{proof}

In section 5, we will introduce a operator $\Box_t$ which can be viewed as the Witten Laplacian on $\mathbb{R}$ (see (\ref{E:Boxt})), and the spectrum of $\Box_t$ will be fully characterized. This result is essential in the proof of Lemma 4.7 and 4.8 below.

\begin{lem}
  For any $p\in\text{Crit}(f)\backslash \text{Crit}(f;l)$ there exists $C>0$ such that 
  \[(\triangle_t^{(l)}u,u)\geq Ct\|u\|^2\]
for any $u\in \Omega^l_0(U_p)$.
\end{lem}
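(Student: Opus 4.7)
The plan is to use the Bochner-type formula on $U_p$ together with the locally flat metric from subsection 3.3 and the Morse coordinates from Lemma \ref{lem:Morse} to reduce the Witten Laplacian to a direct sum of shifted harmonic oscillators on $\mathbb{R}^n$. In those coordinates, formula (4.2) shows that $\triangle_t^{(l)}$ is diagonal with respect to the orthonormal basis $\{dx_J\}_{|J|=l}$. Writing $u=\sum_{|J|=l}u_J\, dx_J$ and extending each $u_J$ by zero from $\varphi(U_p)$ to all of $\mathbb{R}^n$, the quadratic form becomes
\[
(\triangle_t^{(l)}u,u)=\sum_{|J|=l}\bigl(\bigl(-\triangle+t^2|x|^2+t\mu_J\bigr)u_J,\,u_J\bigr)_{L^2(\mathbb{R}^n)},
\]
where $\mu_J$ is the scalar obtained by acting with $\sum_j\varepsilon_j[dx_j\wedge,\,dx_j\lrcorner]$ on $dx_J$.

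Next I would compute $\mu_J$ explicitly. A short check shows $[dx_j\wedge,dx_j\lrcorner]\,dx_J=+dx_J$ when $j\in J$ and $-dx_J$ when $j\notin J$. Letting $r$ denote the index of $p$ and $a_J=|J\cap\{1,\ldots,r\}|$, so that $\varepsilon_j=-1$ on $\{1,\ldots,r\}$ and $\varepsilon_j=+1$ on $\{r+1,\ldots,n\}$, one obtains
\[
\mu_J=\sum_{j\in J}\varepsilon_j-\sum_{j\notin J}\varepsilon_j=2r+2l-4a_J-n.
\]

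The analytic input is the standard fact that the harmonic oscillator $-\triangle+t^2|x|^2$ on $\mathbb{R}^n$ has smallest eigenvalue $tn$; this follows from the one-dimensional spectrum of $\Box_t$ that will be established in Section 5, together with separation of variables, and extends to compactly supported smooth functions by density. Applying this bound slot by slot yields
\[
(\triangle_t^{(l)}u,u)\ge\sum_{|J|=l}t(n+\mu_J)\|u_J\|^2=\sum_{|J|=l}2t(r+l-2a_J)\|u_J\|^2.
\]

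To conclude, observe that $0\le a_J\le\min(l,r)$, hence $r+l-2a_J\ge|r-l|$. By hypothesis $p\notin\mathrm{Crit}(f;l)$, so $r\ne l$ and therefore $r+l-2a_J\ge 1$ uniformly in $J$. Summing over $J$ gives $(\triangle_t^{(l)}u,u)\ge 2t\|u\|^2$, so $C=2$ works. The only real obstacle is the bookkeeping in computing $\mu_J$, in particular getting the sign pattern of the commutator $[dx_j\wedge,dx_j\lrcorner]$ right and then verifying that the resulting constant $r+l-2a_J$ is strictly positive precisely when the index condition $l\ne r$ holds; this is exactly the reason critical points of index $l$ are excluded in the statement and must be treated separately.
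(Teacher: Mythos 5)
Your proof is correct and takes essentially the same route as the paper: reduce to the model operator $\Box_t$ on $\mathbb{R}^n$ via the locally flat metric and the Morse coordinates, then invoke the explicit spectrum of the harmonic oscillator. You merely make explicit the bookkeeping that the paper defers to its subsection on the eigenvalues of $\Box_t$, namely that the lowest eigenvalue in the $dx_J$ slot is $t(n+\mu_J)=2t(r+l-2a_J)\ge 2t|r-l|\ge 2t$ precisely because $r\ne l$.
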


\begin{proof}
  Since in $U_p$ the metric is flat and $|df|^2=|x|^2$, we may regard $u$ as a $l$ form on $\mathbb{R}^n$ and $\triangle_t$ as $\Box_t$. By our study of the spectrum of $\Box_t$, 
  $(\triangle_t u,u)\geq Ct\|u\|^2$ for some $C>0$.
\end{proof}

\begin{lem}
  For any $p\in \text{Crit}(f;q)$, there exist $C>0$, $s>0$ and $\varepsilon>0$ so that when $t\geq s$, 
  \[(\triangle_t^{(q)}(\varphi_{U_p}u),\varphi_{U_p}u)\geq C\left(t\|\varphi_{U_p}u\|^2-e^{-\varepsilon t}\|u\|^2\right)\]
for all $\varphi_{U_p}u\in \Omega^q(U_p)$ and $u\in (\text{span}\beta)^\perp$.
\end{lem}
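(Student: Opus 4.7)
The strategy is to pull $\varphi_{U_p}u$ back to $\mathbb{R}^n$ via the Morse chart at $p$ so that $\triangle_t^{(q)}$ becomes the flat model operator $\Box_t^{(q)}$, exploit the spectral gap of $\Box_t^{(q)}$ above its ground state (to be established in Section~5), and use the orthogonality $u \perp \tilde v_p$ to show that the projection of $\varphi_{U_p}u$ onto that ground state is exponentially small in $t$.

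Concretely, by construction of $g$ in Section~3.3 the metric is flat on the chart $(U_p,\varphi,x)$ and $f$ has the Morse normal form. Hence the Witten Laplacian on forms supported in $U_p$ agrees, after the obvious identification, with the model operator $\Box_t^{(q)}$ on $\mathbb{R}^n$. From the analysis of $\Box_t^{(q)}$ (Section~5), since the index of $p$ equals the degree $q$, the $L^2$-kernel of $\Box_t^{(q)}$ on $\mathbb{R}^n$ is one-dimensional, spanned by the normalized Gaussian form
\[
  w_p(x) \;=\; \left(\tfrac{t}{\pi}\right)^{n/4} e^{-\frac{t}{2}|x|^2}\,dx^1\wedge\cdots\wedge dx^q,
\]
while every other eigenvalue satisfies $\lambda\geq Ct$ for some $C>0$ independent of $t$. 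Writing $\varphi_{U_p}u = a\,w_p + w^{\perp}$ with $(w_p,w^{\perp})=0$, the spectral gap yields
\[
  (\triangle_t^{(q)}(\varphi_{U_p}u),\varphi_{U_p}u) \;=\; (\Box_t^{(q)}w^{\perp},w^{\perp}) \;\geq\; Ct\,\|w^{\perp}\|^2 \;=\; Ct\bigl(\|\varphi_{U_p}u\|^2 - |a|^2\|w_p\|^2\bigr).
\]

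The remaining task is to bound $|a|^2\|w_p\|^2$ by $e^{-\varepsilon t}\|u\|^2$. Recall that $v_p$ is $\chi(x)\,e^{-t|x|^2/2}\,dx^1\wedge\cdots\wedge dx^q$, i.e.\ a constant multiple of $\chi\,w_p$. Since $\varphi_{U_p}\equiv 1$ on $\operatorname{supp}\chi$,
\[
  (\varphi_{U_p}u,\,\chi w_p) \;=\; (u,\,\chi w_p) \;=\; c_t^{-1}(u,v_p),
\]
where $c_t$ is the normalization constant. The hypothesis $u\in(\operatorname{span}\beta)^{\perp}$ together with $\tilde v_p = v_p + R_p$ and $\|R_p\|_\infty\leq e^{-\varepsilon_0 t}$ gives $|(u,v_p)|\leq e^{-\varepsilon_0 t}\|u\|\cdot\mathrm{vol}(M)^{1/2}$. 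Moreover, the Gaussian tail estimate shows $\|w_p - \chi w_p\|_{L^2}\leq e^{-c\,t\varepsilon^2}$ for some $c>0$, so
\[
  |a|\,\|w_p\|^2 \;=\; |(\varphi_{U_p}u,w_p)| \;\leq\; |(\varphi_{U_p}u,\chi w_p)| + \|\varphi_{U_p}u\|\cdot\|w_p-\chi w_p\| \;\leq\; C\,e^{-\varepsilon_1 t}\,\|u\|
\]
for some $\varepsilon_1>0$. Squaring and plugging back yields $|a|^2\|w_p\|^2\leq C'e^{-2\varepsilon_1 t}\|u\|^2$, and choosing $\varepsilon<2\varepsilon_1$ so that $Ct\cdot e^{-2\varepsilon_1 t}\leq C'' e^{-\varepsilon t}$ absorbs the prefactor $t$, we arrive at the claimed inequality.

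I expect the main obstacle to be the bookkeeping of the three different exponentially small errors that appear: the Gaussian tail $\|w_p-\chi w_p\|$ coming from replacing the true $L^2(\mathbb{R}^n)$-ground state by its cutoff version $v_p$; the remainder $R_p$ from Theorem~\ref{thm:4.3} quantifying how close $\tilde v_p$ is to $v_p$; and the linear factor $t$ supplied by the spectral gap, which must be absorbed into a strictly smaller exponential rate $\varepsilon$. A secondary technical point is to verify that $\Box_t^{(q)}$ really does have the claimed one-dimensional kernel with a $Ct$-gap above it \emph{precisely when} $\operatorname{ind}(p)=q$, as opposed to the case handled in Lemma~4.7.
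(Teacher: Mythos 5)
Your proposal is correct and follows essentially the same route as the paper: identify $\triangle_t^{(q)}$ on $U_p$ with the model operator $\Box_t^{(q)}$, expand $\varphi_{U_p}u$ against its ground state, use the $O(t)$ spectral gap, and control the ground-state coefficient by splitting off the cutoff $\chi$ (Gaussian tail) and invoking $u\perp\tilde v_p$ together with the bound on $R_p$. If anything, your write-up is more careful than the paper's on two points it glosses over: the correct normalization $(t/\pi)^{n/4}$ of the ground state and the absorption of the linear factor $t$ into a strictly smaller exponential rate.
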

\begin{proof}
  Like the previous proof, we also regard $u$ as a $q$ form on $\mathbb{R}^n$ and $\triangle_t$ as $\Box_t$. By our calculation in the appendix, there exists a complete orthonormal system $\{u_i\}_{i=0}^\infty$ with 
  \begin{align}
    \begin{cases}
    u_0=\pi^{-n/4}t^{-n/4}\exp(-tx^2/2)dx_1\wedge\cdots\wedge dx_q\in\ker{\Box_t} \\
    u_i\in E_{\lambda_i}(\Box_t)
    \end{cases}
  \end{align}
  also, $0<\lambda_1\leq\lambda_2\leq$ and $\lambda_i\sim O(t)$. Write $\varphi_{U_p}u=\sum_{i=0}^\infty a_i u_i$. Then 
  \begin{align*}
  (\Box_t (\varphi_{U_p}u),\varphi_{U_p}u)_{\mathbb{R}^n}&=\sum_{i=1}^n \lambda_ia_i^2\geq Ct \sum_{i=1}^n a_i^2=Ct(\|\varphi_{U_p}u\|^2-a_0^2) \\ &=Ct(\|\varphi_{U_p}u\|^2-t^{n/2}(\varphi_{U_p}u,e^{-tx^2/2}dx_1\wedge\cdots\wedge dx_q)^2).
  \end{align*}
  Triangle inequality gives 
  \begin{align*}
    |(\varphi_{U_p}u,e^{-tx^2/2}dx_1\wedge\cdots\wedge dx_q)|&\leq |(\varphi_{U_p}u,\chi e^{-tx^2/2}dx_1\wedge\cdots\wedge dx_q)| \\
    &+|(\varphi_{U_p}u,(1-\chi)e^{-tx^2/2}dx_1\wedge\cdots\wedge dx_q)|
  \end{align*}
  Cauchy-Schwartz inequality tells us
  \begin{align*}
    |(\varphi_{U_p}u,(1-\chi) e^{-tx^2/2}dx_1\wedge\cdots\wedge dx_q)|\leq \|\varphi_{U_p}u\|\|(1-\chi) e^{-tx^2/2}dx_1\wedge\cdots\wedge dx_q\| \\
    \leq Cte^{-t\varepsilon^2/4}\|\varphi_{U_p}u\|
  \end{align*}
  Also, we have
  \begin{align}
    |(\varphi_{U_p}u,\chi e^{-tx^2/2}dx_1\wedge\cdots\wedge dx_q)|=|(u,\tilde{v}-R_p)|=|(u,R_p)|\leq \|u\|e^{-\epsilon_0 t}
  \end{align}
  Therefore,
  $(\triangle_t^{(q)}(\varphi_{U_p}u),\varphi_{U_p}u)\geq \tilde{C}t(\|\varphi_{U_p}u\|^2-e^{\epsilon_0 t}\|u\|)$
  when $t$ is sufficiently large.
\end{proof}
\begin{proof}[Proof of Theorem \ref{thm:KeyEstimation}]
By Lemma \ref{lem:1}, 
\[(\triangle_t^{(q)}u,u)\geq\sum_{U\in S} (\triangle_t^{(q)} \varphi_{U_p}u,\varphi_{U_p}u)-C_1\|u\|^2\]
We have
\[\sum_{U\in S} (\triangle_t^{(q)} (\varphi_{U_p}u),\varphi_{U_p}u)=(\triangle_t^{(q)}(\varphi_{V}u),\varphi_{V}u)+\sum_{p\in\text{Crit}(f)\backslash\text{Crit}(f;q)}(\triangle_t^{(q)}(\varphi_{U_p}u),\varphi_{U_p}u)+\sum_{p\in \text{Crit}(f;q)}(\triangle_t^{(q)}(\varphi_{U_p}u),\varphi_{U_p}u)\]
Take $C>0$ so that it satisfies Lemmas 4.6, 4.7 and 4.8 such that 
\begin{align*}
  \sum_{U\in S} (\triangle_t (\varphi_{U_p}u),\varphi_{U_p}u)&\geq Ct\|\varphi_V u\|^2+Ct\sum_{p\in\text{Crit}(f)\backslash\text{Crit}(f;q)}\|\varphi_{U_p}u\|^2+C\sum_{p\in \text{Crit}(f;q)}(t\|\varphi_{U_p}u\|^2-\varepsilon^{-\epsilon_0 t}\|u\|^2) \\
  &\geq \frac{t}{\tilde{C}}\|u\|^2-\tilde{C}(1+e^{-\epsilon_0t})\|u\|^2
\end{align*}
Thus,
\[(\triangle_t^{(q)} u,u) \geq \tilde{\tilde{C}}t\|u\|^2.\]
when $t\gg 1$ for some $\tilde{\tilde{C}}>0$.
\end{proof}

We finally arrive at the stage of showing $\dim E^q_{[0,e^{-Ct}],t}(M)\leq m_q$. Assume $\dim E^q_{[0,e^{-Ct}],t}(M)> m_q$, then there exists $u\in E^q_{[0,e^{-Ct}],t}(M)$ which is perpendicular to $\text{span}\beta^\perp$. But this is impossible, by theorem \ref{thm:KeyEstimation}, 
\[(\triangle_t^{(q)}u,u)\geq \tilde{\tilde{C}}t\|u\|^2\] 
when $t\gg 1$. On the other hand, because $u\in E^q_{[0,e^{-Ct}],t}$,
\[(\triangle_t^{(q)}u,u)\leq e^{-Ct}\|u\|^2.\]
Thus, $\tilde{\tilde{C}}t<e^{-Ct}$ when $t\gg 1$, a contradiction. Therefore, 
\[\dim E^q_{[0,e^{-Ct}],t}(M)\leq m_q.\]
\subsection{Eigenvalues of Harmonic Oscillator}

Consider the operator $H=-\frac{\partial}{\partial x}+x^2$ on $L^2(\mathbb{R})$, called the \textbf{Harmonic Oscillator} (In Quantum Mechanics, this operator is the Hamiltonian of a system called the one-dimensional Harmonic Oscillator (although we need to time $\frac{1}{2}$ in natural units), and this is the reason why we adopt this name).

The calculation in the continuing paragraphs will derive the eigenvalues of $H$. The result is 
\[EV(H)=\{2n+1:n\in\mathbb{Z}_{\geq0}\}\]
where $EV(H)$ denotes the set of eigenvalues of $H$.

Let $\{A_n(x)\}$ be the polynomials determined by the formula
\[ \sum_{n=0}^\infty A_n(x)\frac{\alpha^n}{n!}=e^{-\alpha^2+2\alpha x}.\]
Define 
\[\phi_n(x)=(2^nn!)^{-1/2}\pi^{-1/4}A_n(x)e^{-x^2/2}\]
for all $n\in \mathbb{Z}_{\geq 0}$. We claim that $\{\phi_n\}$ contains all the eigenfunctions of the operator $H$. 

To do this, we need to show that it is a complete orthonormal system for $L^2(\mathbb{R})$. We do this by the following steps:
\begin{enumerate}
  \item\textbf{Explicit formulas of $\phi_n$}: we first show the equality
  \[\phi_n(x)=(-1)^n(2^nn!)^{-1/2}\pi^{-1/4}e^{x^2/2}\frac{d^n}{dx^n}e^{-x^2}\]
  holds. By generalized Cauchy integral formula, 
  \begin{align*}
    \frac{d^n}{d\alpha^n}\bigg\rvert_{\alpha=0} &e^{-\alpha^2+2\alpha x}=\frac{n!}{2\pi i}\int_{\gamma} \frac{\exp(-\alpha^2+2\alpha x)}{\alpha^{n+1}} d\alpha 
   \end{align*}
   replacing $\alpha$ with $\zeta-x$ then gives
   \begin{align*}
    &=e^{x^2}(-1)^n\frac{n!}{2\pi i}\int_{\gamma'} \frac{-\zeta^2}{(\zeta-x)^{n+1}}d\zeta =(-1)^n e^{x^2}\frac{d^n}{dx^n}e^{-x^2},
  \end{align*}
  where $\gamma$ and $\gamma'$ are anticlockwise loop around $0$ and $x$, respectively. Therefore, the proof follows from the Taylor expansion of $e^{-\alpha^2+2\alpha x}$.
  \item \textbf{Recursion formulas of $A_n$}: note that the  polynomials $\{A_n\}$ satisfies the recursion formulas 
  \begin{align} \label{E:recursion}
  \begin{split}
    A_0&=1 \\
    A_n'&=2nA_{n-1} \\ 
    A_{n+1}&=2x A_{n}-2nA_{n-1}.
  \end{split}
  \end{align}
  for $n\geq 1$. Formulas (\ref{E:recursion}) can be obtained directly: from 1, we know that
  \[A_n(x)=(-1)^ne^{x^2}\frac{d^n}{dx^n}e^{-x^2}.\]
  Applying derivation on both sides gives
  \begin{align}\label{E:A'_n}
  \begin{split}
    A'_n(x)&=(-1)^n\frac{d}{dx}\left(e^{x^2}\frac{d^n}{dx^n}e^{-x^2}\right) \\
    &=(-1)^n2x e^{x^2}\frac{d^n}{dx^n}e^{-x^2}+(-1)^n e^{x^2}\sum_{k=0}^n\binom{n}{k} \frac{d^k}{dx^k}(-2x)\frac{d^{n-k}}{dx^{n-k}}e^{-x^2} \\
    &=(-1)^n2x e^{x^2}\frac{d^n}{dx^n}e^{-x^2}-(-1)^n2x e^{x^2}\frac{d^n}{dx^n}e^{-x^2}+(-1)^{n-1}2n e^{x^2}\frac{d^{n-1}}{dx^{n-1}} e^{-x^2} \\
    &=2nA_{n-1}(x).
  \end{split}
  \end{align}
On the other hand,
  \begin{align*}
  A'_n(x)&=(-1)^n\frac{d}{dx}(e^{x^2}\frac{d^n}{dx^n}e^{-x^2}) \\
  &=(-1)^n2x e^{x^2} \frac{d^n}{dx^2}e^{-x^2}+(-1)^ne^{x^2}\frac{d^{n+1}}{dx^{n+1}}e^{-x^2} \\
  &=2xA_n(x)-A_{n+1}(x).
  \end{align*}
  By formula (\ref{E:A'_n}), we can replace $A_n'(x)$ by $2nA_{n-1}(x)$, and this completes our derivation of (\ref{E:recursion}).
  
  Now, by the recursion formulas (\ref{E:recursion}) and 1,
  \begin{align*}
    &H\phi_n+(x^2-(2n+1))\phi_n \\
    =&(2^nn!)^{-1/2}\pi^{-1/4}(-e^{-x^2/2}(x^2-1)A_n +2xe^{-x^2/2}A'_n-e^{-x^2/2}A''_n+(x^2-(2n+1))e^{-x^2/2}) \\
    =&e^{-x^2/2}(2^nn!)^{-1/2}\pi^{-1/4}(4xnA_{n-1}-4n^2A_{n-2}-2nA_n)=\frac{e^{-x^2/2}}{2^nn!}(2nA_n-2nA_n)=0.
  \end{align*}
  This gives $2n+1$ an eigenvalue of $H$ with $\phi_n$ the corresponding eigenfunction.
  \item\textbf{Orthogonality}: for any two non-negative integers $n$ and $m$, define 
  \[I_{n,m}=\int_\mathbb{R} e^{-x^2}A_n(x)A_m(x).\]
  We claim that $I_{n,m}=2^nn!\sqrt{\pi}\delta_{n,m}$: from recursion formulas (\ref{E:recursion}), one can derive immediately that for $n\geq 1$,
  \begin{align}\label{E:hermite-differential-equation}
  A_n''(x)-2xA_n'(x)+2nA_n(x)=0
  \end{align}
  the verification is left to the readers. Formula (\ref{E:hermite-differential-equation}) can be also written as
  \[e^{x^2}\frac{d}{dx}(e^{-x^2}A'_n(x))+2nA_n=0.\]

  Assume $n\neq m$. We can then deduce the following to equations hold:
  \begin{align*}
  e^{x^2}A_m\frac{d}{dx}(e^{-x^2}A'_n(x))+2nA_nA_m=0 \\
  e^{x^2}A_n\frac{d}{dx}(e^{-x^2}A'_m(x))+2mA_nA_m=0
  \end{align*}
  Subtracting them then gives
  \begin{align}\label{E:A_nA_m}
  2(n-m)e^{-x^2}A_nA_m=\left(A_n\frac{d}{dx}(e^{-x^2}A'_m(x))-A_m\frac{d}{dx}(e^{-x^2}A'_n(x))\right).
  \end{align}
  By product rule of differentiation, 
  \begin{align*}
  A_n\frac{d}{dx}(e^{-x^2}A'_m(x))=\frac{d}{dx}(A_ne^{-x^2}A'_m(x))-A'_ne^{-x^2}A'_m(x) \\
  A_m\frac{d}{dx}(e^{-x^2}A'_n(x))=\frac{d}{dx}(A_me^{-x^2}A'_n(x))-A_m'e^{-x^2}A'_n(x)
  \end{align*} 
  Substituting them to formula (\ref{E:A_nA_m}) and integrating on $\mathbb{R}$ give us that
  \begin{align*}
  2(n-m)\int_{\mathbb{R}}e^{-x^2}A_n(x)A_m(x) dx
  &=\int_{\mathbb{R}}\frac{d}{dx}(A_ne^{-x^2}A'_m(x)-A_me^{-x^2}A'_n(x))dx \\
  &=A_ne^{-x^2}A'_m(x)-A_me^{-x^2}A'_n(x)\bigg|_{-\infty}^\infty=0
  \end{align*}
  and thus $I_{n,m}=0$ when $n\neq m$. 
  
  Now, it remains to show $I_{n,n}=2^nn!\sqrt{\pi}$ for all $n\geq 0$. Note that by recursion formulas (\ref{E:recursion})
  \begin{align*}
  0=I_{n-1,n+1}&=\int_\mathbb{R} e^{-x^2} A_{n-1}(x)A_{n+1}(x)dx \\
  &=\int_\mathbb{R} 2xe^{-x^2} A_{n-1}(x)A_{n}(x)dx-2nI_{n-1,n-1}
  \end{align*}
  or
  \begin{align}\label{E:4.28}
  2nI_{n-1,n-1}=\int_\mathbb{R} 2xe^{-x^2} A_{n-1}(x)A_{n}(x)dx
  \end{align}
  Because $A_n(x)=(-1)^ne^{x^2}\frac{d^n}{dx^n}e^{-x^2}$, the right hand side of (\ref{E:4.28}) equals to
  \begin{align}\label{E:4.29}
  -\int_\mathbb{R} 2x e^{x^2} \frac{d^n}{dx^n}e^{-x^2}\frac{d^{n-1}}{dx^{n-1}}e^{-x^2} dx
  \end{align}
  By product rule of differentiation,
  \[2xe^{x^2}\frac{d^{n-1}}{dx^{n-1}}e^{-x^2}=\frac{d}{dx}(e^{x^2}\frac{d^{n-1}}{dx^{n-1}}e^{-x^2})-e^{x^2}\frac{d^n}{dx^n}e^{-x^2}.\]
  Pluging in (\ref{E:4.29}) and observing that $A_n(x)=(-1)^ne^{x^2}\frac{d^n}{dx^n}e^{-x^2}$, we then derive
  \begin{align*}
  2nI_{n-1,n-1}&=\int_\mathbb{R} e^{x^2}\frac{d^n}{dx^n}e^{-x^2}\frac{d^n}{dx^n}e^{-x^2}dx-\int_\mathbb{R}\frac{d^n}{dx^n}e^{-x^2}\frac{d}{dx}(e^{x^2}\frac{d^{n-1}}{dx^{n-1}}e^{-x^2})dx \\
  &=\int_\mathbb{R}e^{-x^2}A_n(x)A_n(x)dx+e^{-x^2}A_n(x)A_{n-1}(x)\bigg|_{-\infty}^\infty+\int_\mathbb{R}e^{-x^2}A_{n-1}(x)A_{n+1}(x)dx \\
  &=I_{n,n}
  \end{align*}
  This recursive result immediately gives us
  \[I_{n,n}=2^nn!I_{0,0}=2^nn!\sqrt{\pi}\]
  due to the fact that
  \[I_{0,0}=\int_\mathbb{R}e^{-x^2}dx=\sqrt{\pi}.\]
  To sum up, we have proven that $I_{n,m}=2^nn!\sqrt{\pi}\delta_{n,m}$. As
  \[\int_\mathbb{R} \phi_n(x)\phi_m(x)=(2^nn!)^{-1/2}\pi^{-1/4}(2^mm!)^{-1/2}\pi^{-1/4}I_{n,m}=\delta_{n,m},\]
  we conclude that the system $\{\phi_n\}$ is orthogonal.
  \item\textbf{Completeness}: to show completeness of the system $\{\phi_n\}$, it suffices to show $f\in L^2(\mathbb{R})$ is orthogonal to the system $\{\phi_n\}$ if and only if $f=0$ almost everywhere. 
  
  For any $f\in L^2(\mathbb{R})$ such that $(f,\phi_n)=0$ for all $n\in\mathbb{Z}_{\geq 0}$, we claim that $f=0$ almost everywhere. Note that each 
  \[
  P_n(x)=e^{x^2}\frac{d^ne^{u^2}}{d u^n}(x)
  \]
is a degree $n$ polynomial. Therefore, the set ${P_n}$ is a basis for the linear space of polynomial. This means
  \[\int_\mathbb{R} f(x)\phi_n(x)=(-1)^n(2^n n!)^{-1/2}\pi^{-1/4}\int_\mathbb{R} f(x)e^{-x^2/2}P_n(x) dx=0\]
  if and only if
  \[\int_\mathbb{R} f(x)e^{-x^2/2} x^n dx=0.\]
  As a result, the entire function
  \begin{align}\label{E:FTofF}
  F(z)=\int_{\mathbb{R}} f(x)e^{-\frac{x^2}{2}+xz}dx=\sum_{n=0}^\infty \int_\mathbb{R} f(x) e^{-x^2/2} x^n dx=0.
  \end{align}
  If we take $z=-it$, then (\ref{E:FTofF}) says $\widehat{fe^{-x^2/2}}=0$. Fourier inversion formula then gives $f=0$ almost everywhere, which means the system $\{\phi_n\}$ is complete.
\end{enumerate}
Denote $L^2_q(\mathbb{R}^n)$ as the set of square integrable $q$-form on $\mathbb{R}^n$. Define an operator $\Box_t^{(r)}$ acting on $L^2_q(\mathbb{R}^n)$ by
\begin{align}\label{E:Boxt}
  \Box_t udx_J=(-\triangle u+t^2|x|^2u)dx_J-tu\sum_{j=1}^r [dx_j\wedge,dx_j\lrcorner](dx_J)+tu\sum_{j=r+1}^n [dx_j\wedge,dx_j\lrcorner](dx_J)
\end{align}
where $\triangle$ denotes the standard Laplacian on $\mathbb{R}^n$ and $|J|=q$.

  Since $[dx_j\wedge,dx_j\lrcorner](dx_J)=\varepsilon_J^j$, where
  \begin{align*}
    \varepsilon_J^j=
    \begin{cases}
    1 & \text{if $j\in J$}\\
    -1 & \text{if $j \not\in J$}
    \end{cases}
  \end{align*}
  the eigenvalues of $\Box_t$ on $L^2_q(\mathbb{R}^n)$ are  
  \begin{align}\label{E:Boxt-eigenvalues}
  \left\{t\sum_{j=1}^r (2N_j+1-\varepsilon_J^j)+t\sum_{j=r+1}^n (2N_j+1+\varepsilon_J^j):(N_1,\ldots,N_n)\in \mathbb{Z}_{\geq 0}\text{ and } |J|=q\right\}
  \end{align}
  So we get
  \begin{align}
    \ker(\Box_t^{(r)})=
    \begin{cases}
      0 & \text{if $r\neq q$} \\
      \mathbb{R} e^{-tx^2}dx_1\wedge\cdots\wedge dx_q & \text{if $r=q$}
    \end{cases}
  \end{align}
  The eigenvalues other than $0$ are all of the form $Kt$ for some $K>0$.

\end{document}